%% file: arxiv.tex
\documentclass[11pt]{article}

\usepackage[margin=1in]{geometry}
\usepackage{times}
\usepackage{microtype}
\usepackage{graphicx}
\usepackage{xcolor}
\usepackage{url}
\usepackage[authoryear,round]{natbib}
\setcitestyle{authoryear,round,citesep={;},aysep={,},yysep={;}}
\usepackage{threeparttable,multicol,multirow}
\usepackage{makecell}
\usepackage{enumerate,enumitem}
\usepackage{algpseudocode,algorithm}
\usepackage{mathtools}
\usepackage{amsthm,listings,booktabs}
\usepackage{authblk}
\usepackage{hyperref}
\hypersetup{
    colorlinks=true,
    linkcolor=blue,
    citecolor=blue,
    urlcolor=blue
}

\input{math.tex}

\title{Optimal Convex Optimization with Inexact Second-Order Oracles}

\author[1]{Lesi Chen}
\author[2]{Chengchang Liu\thanks{Corresponding author: \texttt{liuchengchang@westlake.edu.cn}.}}
\author[3]{Luo Luo}
\author[4]{John C.S. Lui}
\author[1]{Jingzhao Zhang}

\affil[1]{IIIS, Tsinghua University}
\affil[2]{Department of Artificial Intelligence, Westlake University}
\affil[3]{School of Data Science, Fudan University}
\affil[4]{Department of Computer Science and Engineering, The Chinese University of Hong Kong}

\date{}

\begin{document}

\maketitle

\begin{abstract}
In this paper, we 
present a novel second-order method called Accelerated Inexact Newton Extragradient (AINE) for convex optimization using $\delta$-inexact Hessians. We show that AINE can find an $\epsilon$-solution in the inexact second-order oracle (ISO) complexity of $\gO( (\delta/\epsilon)^{1/2} + (L_2/\epsilon)^{2/7} )$ when the Hessian is $L_2$-Lipschitz continuous, and a better complexity of $\gO( (\delta/\epsilon)^{1/2} + (L_3/\epsilon)^{1/5} )$ when the third-order derivative is $L_3$-Lipschitz continuous. Notably, each iteration of our method can be conducted in the same running time as matrix multiplication up to logarithmic factors. In addition, we also establish matching oracle complexity lower bounds for both setups, demonstrating the optimality of our methods.
\end{abstract}

\section{Introduction}
We consider the following optimization problem:
\begin{align} \label{prob:main}
    \min_{\vx \in \sR^d} f(\vx) ,
\end{align}
where the objective $f(\vx)$ is continuously differentiable.
First-order methods for convex optimization have been extensively studied, and complete theories for them have been established. For the task of finding a $\epsilon$-solution, \textit{i.e.} a point $\hat \vx \in \sR^d$ that satisfies $f(\hat \vx) - \min_{\vx \in \sR^d} f(\vx) \le \epsilon$,
\citet{nemirovskij1983problem} established an $\Omega((L_1/\epsilon)^{1/2})$ lower bound on the first-order oracle (FO) complexity for optimizing functions with $L_1$-Lipschitz continuous gradients.
Shortly after, \citet{nesterov1983method} proposed the 
accelerated gradient descent (AGD) method and showed that it achieved the optimal FO complexity of $\gO( (L_1/\epsilon)^{1/2} )$.


Over the past two decades, there has been a growing interest in studying globally convergent second-order methods. In the seminal work, \citet{nesterov2006cubic} introduced the cubic regularized Newton (CRN) method and showed that it achieves the second-order oracle (SO) complexity of $\gO((L_2/\epsilon)^{1/2})$ when the objective has $L_2$-Lipschitz continuous Hessians.
Later,
\citet{nesterov2008accelerating} proposed an accelerated CRN method to achieve a better SO complexity of $\gO((L_2/\epsilon)^{1/3})$. \citet{monteiro2013accelerated} proposed a novel accelerated Newton proximal extragradient (A-NPE) method that can achieve the SO complexity of $\gO( (L_2/\epsilon)^{2/7} \log (L_2/\epsilon))$.
When the objective has $L_3$-Lipschitz continuous third-order derivatives,  
\citet{nesterov2021implementable} proposed a superfast second-order method that achieves the SO complexity of $\gO( (L_3/\epsilon)^{1/4} \log (L_3/\epsilon))$.
Then, three independent groups \citep{gasnikov2019optimal,bubeck2019near,jiang2021optimal} obtained a lower SO complexity of $\gO( (L_3/\epsilon)^{1/5} \log (L_3/\epsilon))$ by extending the A-NPE method. Very recently, two independent groups \citep{kovalev2022first,carmon2022optimal} proposed novel techniques to reduce the logarithmic factors, achieving the optimal SO complexity of $\gO( \min\{ (L_2/\epsilon)^{2/7}, \allowbreak (L_3/\epsilon)^{1/5} \} )$ that matches the oracle lower bounds for both settings \citep{arjevani2019oracle}.

Although optimal SO complexities have been established for second-order methods, implementing these methods with exact Hessian oracles could be expensive. Therefore, one may use  Hessian approximations rather than the exact ones. \citet{ghadimi2017second} proposed the inexact Newton with cubic regularization (INCR) and its acceleration (AINCR) that achieves the inexact second-order oracle (ISO) complexity of $\gO(\delta/\epsilon + (L_2/\epsilon)^{1/2})$ and $\gO((\delta/\epsilon)^{1/2} + (L_2/\epsilon)^{1/3})$, respectively. Subsequently, \citet{agafonov2024inexact,agafonovadvancing} proposed the accelerated inexact tensor method (AITM) that further requires a $\delta'$-inexact third-order derivative information and
achieves the inexact third-order oracle complexity of $\gO( (\delta/\epsilon)^{1/2} + (\delta'/\epsilon)^{1/3} + (L_3/\epsilon)^{1/4})$. However, we observe several limitations in these works: First, these inexact methods do not achieve optimal complexity in the exact setting when $\delta = 0$ \citep{arjevani2019oracle}. Second, the AITM method \citep{agafonov2024inexact,agafonovadvancing} requires querying $\delta'$-inexact third-order derivatives, and its convergence rate depends on an extra parameter $\delta'$. 



In this paper, we propose a novel inexact second-order methods called  Accelerated Inexact Newton Extragradient (AINE) that generalizes the optimal exact methods \citep{carmon2022optimal,kovalev2022first} and addresses both issues above. We show that our method achieves an improved ISO complexity of $\gO( (\delta/\epsilon)^{1/2} + (L_2/\epsilon)^{2/7} )$ for functions with $L_2$-Lipschitz continuous Hessians, and that of $\gO( (\delta/\epsilon)^{1/2} + (L_3/\epsilon)^{1/5} )$ for functions with $L_3$-Lipschitz continuous third-order derivatives. Moreover, we extend the techniques by \citet{nesterov2021implementable,nesterov2021inexact,nesterov2021superfast} to propose an efficient second-order subroutine that implements the required third-order tensor step in the latter case in almost matrix multiplication time, which also removes the $\delta'$ dependency since the inexact third-order derivatives are not used at all. Finally, we also provide matching oracle lower bounds to demonstrate the optimality of our methods.  We summarize our results in Table \ref{tab:main-res} and introduce additional related works in Appendix \ref{sec:related}.

\begin{table}[t]
    \centering
    \caption{Comparison of the inexact second-order oracle (ISO) complexity and per iteration costs of second-order methods with $\delta$-inexact Hessians to find an $\epsilon$-solution of for a convex function with $L_2$-Lipschitz Hessians \textbf{(top)} and with $L_3$-Lipschitz continuous third-order derivatives \textbf{(bottom)}.}
    \label{tab:main-res}
    \begin{threeparttable}
    \begin{tabular}{c c c}
    \hline
    Method &  ISO Complexity  & Per Iter. Cost {\color{blue} \textsuperscript{(a)}} \\ 
    \hline \addlinespace
    \makecell{INCR \\\citep{ghadimi2017second}}  & 
    $\gO\left( \delta/ \epsilon + (L_2/ \epsilon)^{1/2} \right)$ 
   & $\gO(d^\omega + C d)$  \\ \addlinespace
     \makecell{AINCR\\\citep{ghadimi2017second}}  & $\gO\left( (\delta/ \epsilon)^{1/2} + (L_2/ \epsilon)^{1/3} \right)$ & 
    $\gO(d^\omega + C d)$ \\ \addlinespace
   \makecell{AINE \\(This Paper)}  &
   $\gO\left( (\delta/ \epsilon)^{1/2} + (L_2/ \epsilon)^{2/7} \right)$
   & $\gO(d^\omega + C d)$ \\  \addlinespace
 \makecell{Lower Bound \\ \citep{agafonovadvancing}} &
$\Omega\left( (\delta/ \epsilon)^{1/2} + (L_2/ \epsilon)^{2/7} \right)$
& - \\ 
\hline \addlinespace
\makecell{AITM \\ \citep{agafonov2024inexact,agafonovadvancing}} & 
$\gO\left( (\delta/ \epsilon)^{1/2} + (\delta'/\epsilon)^{1/3} + (L_3/ \epsilon)^{1/4} \right)$
   {\color{blue} \textsuperscript{(b)}} &  $\tilde \gO(d^\omega + C d)$ {\color{blue} \textsuperscript{(c)}} \\ \addlinespace
\makecell{AINE \\(This Paper)} & $\gO\left( (\delta/ \epsilon)^{1/2} + (L_3/ \epsilon)^{1/5} \right)$ & $\tilde \gO(d^\omega + C d)$ 
   \\ \addlinespace
\makecell{Lower Bound \\(This Paper)} & $\Omega\left( (\delta/ \epsilon)^{1/2} + (L_3/ \epsilon)^{1/5} \right)$ & - \\ 
\hline
\end{tabular}
    \end{threeparttable}
    \begin{tablenotes}
\footnotesize
\item {\color{blue}(a)} We let $d^\omega \approx d^{2.37}$ to denote the running time of multiplying two $d \times d$ matrices~\citep{duan2023faster,alman2025more}, and we follow the convention \citep{nesterov2006cubic,nesterov2021implementable,nesterov2021inexact,nesterov2021superfast} to assume the cost of solving the one-dimensional convex sub-problem is a constant $C$.
\item {\color{blue}(b)} \citet{agafonov2024inexact} requires querying $\delta'$-inexact third-order oracle.
\item {\color{blue}(c)} \citet{agafonov2024inexact} did not provide a rigorous analysis of the per iteration costs, but it can be obtained as we discuss in Remark \ref{rmk:inexact-AITM-aux}.
\end{tablenotes}
\end{table}

\section{Preliminaries} \label{sec:pre-2nd}

\paragraph{Notations.} Throughout this paper, we use $\Vert \cdot \Vert$ to denote both the Euclidean norm of a vector and the operator norm of a matrix/tensor. 
We let $\sB_D^d(\vc)$ denote the ball $ \{  \vx \in \sR^d : \Vert \vx - \vc \Vert \le D \}$.
For any set $S$ and functions $g, h :  S \rightarrow [0, \infty)$ we write $g = \gO(h)$ or $h = \Omega(g)$ equivalently if
there exists $c > 0$ such that $g(s) \le c  h(s)$ for every $s \in S$. We also write 
$h = \Theta(g)$ if $h = \gO(g) $ and $  h = \Omega(g)$, and write $\tilde \gO(\,\cdot\,)$ to hide the logarithmic factors in $\gO(\,\cdot\,)$.

\subsection{Assumptions}

Throughout this paper, we assume that the objective function $f: \sR^d  \rightarrow \sR$ is convex
and has Lipschitz continuous $p$th-order derivatives.

\begin{asm} \label{asm:convex}
We assume that $f:\sR^d \rightarrow \sR$ is convex and has at least one solution $\vx^* \in \arg \min_{\vx \in \sR^d} f(\vx)$.
\end{asm}

\begin{asm} \label{asm:lip-pth-deriv}
We assume that the function $f: \sR^d \rightarrow \sR$ is $p$ times continuous differentiable and has $L_p$-Lipschitz continuous $p$th-order derivatives, \textit{i.e.},
\begin{align*}
    \Vert \nabla^p f(\vx) - \nabla^p f(\vy) \Vert \le L_p \Vert \vx- \vy \Vert, \quad \forall \vx, \vy \in \sR^d,
\end{align*}
where $L_p>0$ is the Lipschitz constant of $\nabla^p f$.
\end{asm}

In this paper, we are mostly interested in the case that $p \in \{ 1,2,3\}$, for which we can develop globally convergent second-order algorithms \citep{nesterov2021implementable}.  For the cases $p \ge 4$, although there are conceptual tensor algorithms that could be applied \citep{birgin2017worst,nesterov2021implementable}, it is still open whether the tensor steps in these algorithms can be solved efficiently. Therefore, this work only considers the currently implementable setting that $p \in \{1,2,3\}$. Following \citet{ghadimi2017second,agafonov2024inexact,agafonovadvancing}, we assume that the algorithms have access to a $\delta$-Hessian estimator $\mH(\vx)$, formally defined as follows.

\begin{asm} \label{asm:delta-Hess}
We assume that the algorithm has access to a $\delta$-Hessian estimator $\mH(\vx) \in \sR^{d \times d}$ satisfies that for any $\vx \in \sR^d$, we have  
\begin{align*}
 \mH(\vx) = \mH^\top(\vx) \quad {\rm and} \quad \Vert \mH(\vx) - \nabla^2 f(\vx) \Vert \le \delta.
\end{align*}
\end{asm}

This assumption unifies both first- and second-order methods, which recovers the second-order optimization with exact oracles \citep{nesterov2006cubic}, and first-order optimization when the objective has $L_1$-Lipschitz continuous gradients \citep{nesterov1983method} if $\delta=0$. Under the above assumptions, we aim to find an approximate solution that has a small suboptimality gap:
\begin{dfn}
We say $\hat \vx \in \sR^d$ an $\epsilon$-solution to Problem (\ref{prob:main}) if $f(\hat \vx) - \min_{\vx \in \sR^d} f(\vx) \le \epsilon$.
\end{dfn}
Finally, we count the inexact second-order oracle (ISO) complexity by the number of inexact Hessian $\mH(\vx)$ calls to find an $\epsilon$-solution. 





\section{Accelerated Inexact Newton Extragradient}

\citet{ghadimi2017second,agafonov2024inexact,agafonov2024exploring} proposed inexact second- or higher-order methods by generalizing the Nesterov acceleration \citep{nesterov2008accelerating}. However, in the exact case, the Nesterov acceleration has known suboptimal for high-order optimization since the $\epsilon$ dependence can be improved via the A-NPE framework \citep{monteiro2013accelerated}. Therefore, it is natural to study applying the same technique in the exact setting.




\subsection{Optimal Exact Second-Order Methods} \label{subsec:back-ANPE}

We first review the accelerated Newton proximal extragradient (A-NPE) framework \citep{monteiro2013accelerated} in Algorithm~\ref{alg:A-NPE}, a powerful tool to achieve the optimal convergence rate for second-order methods. 
The most subtle part in Algorithm \ref{alg:A-NPE} is the proximal point step in Eq. (\ref{eq:prox-y-subproblem}), which requires solving an implicit and cyclical nonlinear equation in $(\vy_t,\lambda_{t})$. Below, we briefly recall the solutions to solve this implicit equation in existing literature.

\begin{algorithm}[htbp]  
\caption{A-NPE Framework($\vx_0, T$)}  \label{alg:A-NPE}
\begin{algorithmic}[1] 
\State $A_0 = 0$ 
\State \textbf{for} $t = 0,\cdots, T-1$ 
\State \quad \label{line:proximal-step} Compute $\vy_{t} \in \sR^d$ and $\lambda_{t} \in \sR_+$ such that 
\begin{align} \label{eq:prox-y-subproblem}
 \vy_{t} \approx \arg \min_{\vy \in \sR^d} f(\vy) + \Vert \vy - \vz_t \Vert^2/2,   
\end{align}
\quad ~~where
\begin{align*}
 \vz_t = (A_t \vx_t + a_{t+1}  \vv_t)/{A_{t+1}}, \quad   A_{t+1} = A_t + a_{t+1}, \\
A_{t+1} \lambda_{t}^2 =  a_{t+1}^2 ,~~ {\rm and} ~~\lambda_{t} = \Theta(L_p \Vert \vy_{t} - \vz_t  \Vert^{p-1}).
\end{align*} 
\State \quad Update $\vx_{t+1} = \vy_{t}$ and $\vv_{t+1} = \vv_t - a_{t+1} \nabla f(\vy_{t})$. \label{line:extragrad-step} 
\State \textbf{end for} 
\State \textbf{return} $\vx_T$
\end{algorithmic}
\end{algorithm}

\citet{monteiro2013accelerated} showed that this equation can be solved in $\gO(\log ( L_2/\epsilon))$ iterations by a bisection in $\lambda_{t}$ with Newton updates $ \vy_t = \vz_t -  ( \nabla^2 f(\vz_t) + \lambda_{t} \mI_d)^{-1} \nabla f(\vz_t)$, leading to a near-optimal second-order method with an SO complexity of $\gO((L_2/\epsilon)^{2/7} \log (L_2/\epsilon) )$. A similar bisection strategy is also adopted in the tensor generaliza ons
\citep{gasnikov2019optimal,bubeck2019near,jiang2021optimal} that can achieve a faster rate of 
$\gO((L_3/\epsilon)^{1/5} \log (L_3/\epsilon) )$ under the additional $L_3$-smoothness assumption. However, the bisection could be costly in practice and introduces the unnecessary  additional logarithmic factors in theory.


\citet{carmon2022optimal} removed the bisection using the following strategy. Their method
maintains a guess $\lambda_{t}'$ to calculate $\vz_t$ and then obtains~$\vy_t$ by taking an exact tensor step from $\vz_t$. After $\vy_t$ is computed, it computes $\lambda_t = \Theta(L_p \Vert \vy_{t} - \vz_t  \Vert^{p-1})$ and compares $\lambda_t$ with the guess $\lambda_t'$. If the guess underestimates ($\lambda'_t < \lambda_t$), the algorithm simply performs the same updates as the original A-NPE method \citep{monteiro2013accelerated}. If it overestimates ($\lambda'_t \ge \lambda_t$), which stabilizes the algorithm by scaling down the growth of $A_{t+1}$ by a factor of $\lambda_{t}/ \lambda'_{t}$ and replacing the next iterate $\vx_{t+1}$ by a convex combination of $\vx_t$ and $\vy_t$.
\citet{carmon2022optimal} applies a multiplicative update for the guess $\lambda'_t$. In a concurrent work, \citet{kovalev2022first} obtained the same optimal rate by predetermining the regularization $\lambda_t$ and then solving the proximal subproblem in Eq. (\ref{eq:prox-y-subproblem}) with another Newton extragradient subroutine.

\subsection{Optimal Inexact Second-Order Method} \label{subsec:algo-AINE}
As we see above, one key step in implementing A-NPE is correctly choosing the regularization coefficient $\lambda_t$.
The techniques used in \citet{monteiro2013accelerated,carmon2022optimal,kovalev2022first} are not sufficient to handle the case when the Hessian is inexact. 
The analyses in \citet{monteiro2013accelerated,carmon2022optimal} both require lower-bounding $\Vert \vy_t - \vz_t \Vert^2 $ using $\lambda_t$, which is unfortunately violated by inexact second-order methods where $\lambda_t = \Omega(\delta)$. Moreover, while \citet{kovalev2022first} does not use the aforementioned inequality, it remains unclear how to set the regularization $\lambda_t$ in advance, since it should decrease at a different rate depending on whether $\delta$ is larger than $L_p \Vert \vy_t - \vz_t \Vert^{p-1} / p!$ at each iteration.

We address these issues by leveraging an epoch-wise guessing strategy to stabilize the regularization schedule. Inspired by  \citet{adil2024convex}, our approach partitions iterations into distinct epochs and adaptively assigns a predetermined $\lambda'_t$ via a Guess subroutine. This finally leads to the 
accelerated inexact Newton extragradient (AINE) method as presented in Algorithm \ref{alg:AINE}. The algorithm utilizes the following $\delta$-MS oracle.
\begin{dfn} \label{dfn:delta-MS}
We say the mapping $\sO_p^\delta:  \sR^d \mapsto \sR^d \times \sR_+ $ a $p$th-order $\delta$-MS oracle if for an input point $\vx \in \sR^d$, it returns the a pair $(\vy,\lambda) = \sO_p^\delta(\vx)$ that satisfies:
\begin{enumerate}
    \item The MS condition $ \Vert \nabla f(\vy) + \lambda (\vy - \vx) \Vert \le \lambda \Vert \vy - \vx \Vert /2$.
    \item The $p$th-order $\delta$-inexact movement bound $\lambda \ge c ( \delta + L_p \Vert \vy - \vx \Vert^{p-1} / p! )$ for some $c>0$.
\end{enumerate}
\end{dfn}
The following lemma shows that the output of a $\delta$-inexact CRN step is a $\delta$-MS oracle when $p=2$. 
\begin{lem} \label{lem:inexact-Newton-step}
Under Assumption \ref{asm:lip-pth-deriv} and \ref{asm:delta-Hess},
the second-order $\delta$-MS oracle $\sO_2^\delta: \sR^d \mapsto \sR^d \times \sR_+$ can be implemented by the following inexact cubic regularized Newton (CRN) step:
\begin{align} \label{eq:inexact-CRN-step}
\begin{split}
        \vy =& \arg \min_{\vy \in \sR^d} \langle \nabla f(\vx), \vy - \vx \rangle + \frac{1}{2} \langle (\mH(\vx) + 2\delta \mI_d) (\vy - \vx) , \vy - \vx \rangle + \frac{L_2}{3} \Vert \vy - \vx \Vert^{3}, \\
    \lambda =& 2\delta + L_2 \Vert \vy - \vx \Vert.
\end{split}
\end{align}
\end{lem}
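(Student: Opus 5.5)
Write $\vh = \vy - \vx$ and $r = \Vert \vh \Vert$. The plan is to read off the MS condition from the first-order optimality condition of the cubic subproblem in Eq. (\ref{eq:inexact-CRN-step}). The movement bound is then immediate.

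\textbf{Optimality condition.} The subproblem objective is the sum of a quadratic with matrix $\mH(\vx) + 2\delta\mI_d \succeq \nabla^2 f(\vx) + \delta \mI_d \succeq \delta \mI_d$ (by Assumption \ref{asm:delta-Hess} and convexity of $f$) and the convex term $\frac{L_2}{3}\Vert\vh\Vert^3$. Hence it is strongly convex when $\delta>0$, and in any case convex and coercive, so a global minimizer exists. This minimizer satisfies
\begin{align*}
\nabla f(\vx) + (\mH(\vx) + 2\delta \mI_d)\vh + L_2 r\, \vh = 0 .
\end{align*}
Substituting $\lambda = 2\delta + L_2 r$, this becomes $\nabla f(\vx) + \mH(\vx)\vh + \lambda \vh = 0$.

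\textbf{MS condition.} Using the identity above,
\begin{align*}
\nabla f(\vy) + \lambda(\vy - \vx) = \nabla f(\vy) - \nabla f(\vx) - \nabla^2 f(\vx)\vh + (\nabla^2 f(\vx) - \mH(\vx))\vh .
\end{align*}
Assumption \ref{asm:lip-pth-deriv} with $p=2$ gives the standard Taylor bound $\Vert \nabla f(\vy) - \nabla f(\vx) - \nabla^2 f(\vx)\vh\Vert \le \frac{L_2}{2} r^2$. Assumption \ref{asm:delta-Hess} bounds the last term by $\delta r$. So the norm is at most $\delta r + \frac{L_2}{2}r^2 = \frac{1}{2}(2\delta + L_2 r) r = \lambda r/2$, which is exactly the MS condition.

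\textbf{Movement bound.} We have $\lambda = 2\delta + L_2 r \ge \delta + L_2 r/2! $, so the second condition of Definition \ref{dfn:delta-MS} holds with $c = 1$.

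\textbf{Main obstacle.} The only delicate point is well-definedness of the $\arg\min$: we need existence and uniqueness of the minimizer, and that the first-order condition characterizes it. When $\delta>0$ this follows from strong convexity. When $\delta = 0$ the subproblem is exactly CRN with a PSD Hessian, and I would cite \citet{nesterov2006cubic} for existence. Any minimizer then satisfies the stationarity condition, which is all the argument uses. The remaining steps are direct calculation.
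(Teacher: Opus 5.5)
Your proposal is correct and takes essentially the same route as the paper. You rewrite the stationarity condition of the cubic subproblem as $\nabla f(\vx) + \mH(\vx)(\vy-\vx) + \lambda(\vy-\vx) = \vzero$, then split $\nabla f(\vy)+\lambda(\vy-\vx)$ into the Taylor remainder, bounded by $\frac{L_2}{2}\Vert\vy-\vx\Vert^2$, and the Hessian-error term, bounded by $\delta\Vert\vy-\vx\Vert$. You also make explicit three things the paper leaves implicit or gets slightly wrong: the movement bound $\lambda \ge \delta + L_2\Vert\vy-\vx\Vert/2$; existence of a minimizer, which follows from coercivity alone, so convexity is not even needed since the argument only uses stationarity; and the correct gradient $L_2\Vert\vh\Vert\vh$ of the cubic term, where the paper's displayed optimality condition has a typo.
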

\begin{rmk}
Lemma~\ref{lem:inexact-Newton-step} shows that $\sO_2^{\delta}$ can be achieved by an inexact CRN update for functions with $L_2$-Lipschitz continuous Hessians. 
In the following, we study the convergence of the AINE by assuming access to $\sO_p^{\delta}$, which applies to a broader range of settings by varying the order of $p$. 
We will also show that the proposed framework can lead to a superfast inexact second-order method for functions with $L_3$-Lipschitz continuous third-order derivatives in the next section.
\end{rmk}

\begin{algorithm}[t]  
\caption{AINE($\vx_0, T, \delta, L_p$)}  \label{alg:AINE}
\begin{algorithmic}[1] 
\Require $\delta$-inexact MS oracle $(\vy,\lambda) = \sO_p^\delta(\vz)$ for $p \in \{ 1,2,3\}$.
\Function{Guess}{$\lambda_0, t, A_t$}
\State \textbf{if} $ t= 0 $ \textbf{then}  
\State \quad \textbf{return} $\lambda_t' = \lambda_0$  
\State \textbf{else if} $t =1$ \textbf{or} $ A_t > 2A_s $ \textbf{then}  
\State \quad Store $s = t$ and $A_s = A_t$. \label{Line:store-guess}  
\State \quad \textbf{return} $\lambda'_t$ as Eq. (\ref{eq:para-lambda-s})  
\State \textbf{else}  
\State \quad \textbf{return} $\lambda'_t = \lambda_s'$  
\State \textbf{end if}  
\EndFunction
\State $\vv_0 = \vz_0 = \vx_0$, ~~ $A_0 = 0$  \Comment{Main procedure}
\State $(\vy_0,\lambda_0) = \sO^{\delta}_{p}(\vz_0)$  
\State \textbf{for} $t = 0,\cdots, T-1$ 
\State \quad $\lambda'_{t}  =\text{Guess}(\lambda_0,t,A_t)$   
\State \quad $a_{t+1}' =  ( 1 + \sqrt{1 + 4 \lambda_{t}' A_t})  / (2 \lambda'_{t})$  
\State \quad $A_{t+1}' = A_t + a_{t+1}'$, ~~ $\vz_t = (A_t \vx_t + a_{t+1}' \vv_t)/{A_{t+1}'} $  
\State \quad \textbf{if} $t > 0 $ \textbf{then}   
\State \quad \quad $(\vy_{t}, \lambda_t) = \sO_{p}^{\delta}(\vz_t)$  
\State \quad \textbf{end if}  
\State \quad $\beta_t = \min \{1, \lambda_t'/\lambda_t \}$, ~~ $ a_{t+1} = \beta_t a'_{t+1}$, ~~ $A_{t+1} = A_t + a_{t+1}$  
\State \quad $\hat \vx_{t+1} =  \left( (1- \beta_t ) A_t \vx_t + \beta_t A'_{t+1} \vy_t \right) \big / A_{t+1}$ 
\State \quad \textbf{if} $p \in \{1,2\}$ \textbf{then}
\State \quad \quad $\vx_{t+1} = \hat \vx_{t+1}$ 
\State \quad \textbf{else} \Comment{Use the monotone variant of A-NPE \citep[Line 8, Algo. 4]{carmon2022optimal}.}
\State \quad \quad $\vx_{t+1} \leftarrow \arg \min_{\vx} \{f(\vx) : \vx \in \{ \vx_0, \cdots, \vx_t, \hat \vx_{t+1}  \} \}$ \label{line:monotone-MS-option}
\State \quad \textbf{end if}
\State \quad $\vv_{t+1} = \vv_t - a_{t+1} \nabla f(\vy_{t})$
\State \textbf{end for} 
\State \textbf{return} $\vx_T$ 
\end{algorithmic}
\end{algorithm}

Since the convergence result from \citet{carmon2022optimal} or \citep{adil2024convex} only requires the MS condition (Condition 1 in Definition~\ref{dfn:delta-MS}), and is independent of the other components in Algorithm~\ref{alg:AINE},  we can obtain the following guarantee of the AINE method. 

\begin{lem}[{\citet[Proposition 1]{carmon2022optimal}}]
\label{lem:Car}
Let Assumption \ref{asm:convex} hold and define 
\begin{align} \label{eq:dfn-notation-Car}
\begin{split}
    E_t: = f(\vx_t) - f(\vx^*)~~ {\rm and} ~~ D_t:= \frac{1}{2} \Vert \vv_t - \vx^* \Vert^2.
\end{split}
\end{align}
If in Algorithm \ref{alg:AINE}, all the pair $(\vy_t,\vz_t,\lambda_t)$ for all $t = 0,\cdots, T-1$ satisfies the MS condition (condition 1 of Definition~\ref{dfn:delta-MS}), then the iterates generated by the algorithm satisfy that
\begin{align} \label{eq:decrease-Car}
\begin{split}
    (A_t E_t + D_t) - (A_{t+1} E_{t+1} + D_{t+1}) \ge  \frac{3}{8} A'_{t+1} \min\{ \lambda_{t}, \lambda'_t \} \Vert \vy_t - \vz_t \Vert^2. 
\end{split}
\end{align}
\end{lem}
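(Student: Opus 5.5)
The plan is to follow the standard A-NPE potential argument of \citet{monteiro2013accelerated}, adapted to the guessing and rescaling mechanism of \citet{carmon2022optimal}. The argument splits into a bound on $D_{t+1}$ and a bound on $A_{t+1}E_{t+1}$. First I record the algebraic identities that hold by construction. The choice of $a'_{t+1}$ gives $\lambda'_t (a'_{t+1})^2 = A'_{t+1}$. With $a_{t+1} = \beta_t a'_{t+1}$ and $A_{t+1} = A_t + a_{t+1}$, we also have $A_{t+1}\hat\vx_{t+1} = (1-\beta_t)A_t\vx_t + \beta_t A'_{t+1}\vy_t$. This holds because $(1-\beta_t)A_t + \beta_t A'_{t+1} = A_t + \beta_t a'_{t+1} = A_{t+1}$, so $\hat\vx_{t+1}$ is a genuine convex combination. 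For $p=3$ the monotone step only lowers $f(\vx_{t+1})$, so it suffices to prove the claim with $\hat\vx_{t+1}$ in place of $\vx_{t+1}$.

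\textbf{Distance term.} Expanding $D_{t+1} = \tfrac12\Vert \vv_t - a_{t+1}\nabla f(\vy_t) - \vx^*\Vert^2$ gives
\[
D_t - D_{t+1} = a_{t+1}\langle \nabla f(\vy_t), \vv_t - \vx^*\rangle - \tfrac12 a_{t+1}^2 \Vert\nabla f(\vy_t)\Vert^2 .
\]

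\textbf{Function-value term.} By convexity of $f$ and Jensen's inequality,
\[
A_{t+1} f(\hat\vx_{t+1}) \le (1-\beta_t)A_t f(\vx_t) + \beta_t A'_{t+1} f(\vy_t).
\]
I then bound $f(\vy_t)$ using the gradient inequality at $\vy_t$ toward $\vx_t$ and toward $\vx^*$, with weights $A_t$ and $a'_{t+1}$. The combination $A_t\vx_t + a'_{t+1}\vv_t$ equals $A'_{t+1}\vz_t$, which yields
\[
\beta_t A'_{t+1}(f(\vy_t)-f^*) \le \beta_t A_t (f(\vx_t)-f^*) + \beta_t \langle \nabla f(\vy_t), A'_{t+1}\vy_t - A'_{t+1}\vz_t + a'_{t+1}(\vv_t-\vx^*)\rangle.
\]
Adding the two pieces, the $A_tE_t$ terms combine exactly and the linear term $a_{t+1}\langle\nabla f(\vy_t),\vv_t-\vx^*\rangle$ cancels. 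What remains is
\[
(A_tE_t + D_t) - (A_{t+1}E_{t+1}+D_{t+1}) \ge \beta_t A'_{t+1}\langle \nabla f(\vy_t), \vz_t - \vy_t\rangle - \tfrac12 \beta_t^2 (a'_{t+1})^2 \Vert\nabla f(\vy_t)\Vert^2 .
\]

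\textbf{Applying the MS condition.} This is the step I expect to be the main obstacle. Write $r = \Vert\vy_t-\vz_t\Vert$ and $\vg = \nabla f(\vy_t)$. Squaring the MS condition $\Vert \vg + \lambda_t(\vy_t-\vz_t)\Vert \le \lambda_t r/2$ gives
\[
\langle \vg, \vz_t-\vy_t\rangle \ge \frac{\Vert \vg\Vert^2}{2\lambda_t} + \frac{3}{8}\lambda_t r^2 .
\]
Substituting $(a'_{t+1})^2 = A'_{t+1}/\lambda'_t$, the right-hand side is at least
\[
\beta_t A'_{t+1}\Big[\tfrac{\Vert\vg\Vert^2}{2\lambda_t} - \tfrac{\beta_t\Vert\vg\Vert^2}{2\lambda'_t} + \tfrac38 \lambda_t r^2\Big].
\]
Since $\beta_t \le \lambda'_t/\lambda_t$, the gradient terms are nonnegative. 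What is left is $\tfrac38 A'_{t+1}\beta_t\lambda_t r^2 = \tfrac38 A'_{t+1}\min\{\lambda_t,\lambda'_t\} r^2$, because $\beta_t\lambda_t = \min\{\lambda_t,\lambda'_t\}$. The delicate points are the cancellation of $\beta_t$ factors and verifying that the case $t=0$ (where $A_0=0$ and $\vz_0=\vx_0$) fits the same computation. I would check these carefully, but the Guess rule plays no role: only the MS condition is used.
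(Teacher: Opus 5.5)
Your proof is correct and is exactly the standard Monteiro--Svaiter potential argument behind \citet[Proposition 1]{carmon2022optimal} (with MS parameter $\sigma=1/2$, so $(1-\sigma^2)/2=3/8$), which the paper simply cites rather than reproving. Your side remarks also check out: only the MS condition is used, the monotone step for $p=3$ can only decrease $A_{t+1}E_{t+1}$, and $t=0$ fits because $A_0=0$, $\vz_0=\vv_0$ and $\lambda'_0=\lambda_0$ gives $\beta_0=1$.
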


%

Based on the above lemma, we obtain the following convergence rate of AINE by appropriately selecting the guess in the subroutine. 

\begin{thm} \label{thm:AINE}
Let Assumption \ref{asm:convex}, \ref{asm:lip-pth-deriv}, and \ref{asm:delta-Hess} hold for any $p \in \sN_+$. If we
select $\lambda_s'$ in the Guess subroutine of Algorithm~\ref{alg:AINE} according to 
\begin{align} \label{eq:para-lambda-s}
    \lambda'_s = \Theta \left( \delta + \left( D_0 {L_p^{\frac{2}{p-1}}}/{A_{s}^{\frac{3}{2}}}  \right)^{\frac{2(p-1)}{3p+1}} \right),
\end{align}
then Algorithm \ref{alg:AINE} has the following convergence rate:
\begin{align} \label{eq:final-rate-2nd}
    A_T= \Omega\left( \min \left\{ \frac{T^2}{\delta}, \frac{T^{(3p+1)/2}}{D_0^{(p-1)/2} L_p} \right\} \right) \quad {\rm and} \quad
    E_T \le \frac{D_0}{A_T} = \gO \left( \frac{\delta D_0}{T^2} + \frac{L_p D_0^{(p+1)/2}}{T^{(3p+1)/2}}  \right).
\end{align}
\end{thm}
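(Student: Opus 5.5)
The argument combines the potential decrease of Lemma~\ref{lem:Car} with the movement bound (Condition 2 of Definition~\ref{dfn:delta-MS}). Together they show that $A_t$ grows fast within each epoch of the Guess subroutine.

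\textbf{Step 1: telescoping.} Summing Eq.~(\ref{eq:decrease-Car}) and using $A_0=0$ gives $A_T E_T \le D_0$, so the bound $E_T\le D_0/A_T$ is immediate. It also gives
\[
\sum_t A'_{t+1}\min\{\lambda_t,\lambda'_t\}\Vert \vy_t-\vz_t\Vert^2 \le \tfrac{8}{3}D_0 ,
\]
and the same bound holds for every partial sum over an epoch. All the work is therefore in the lower bound on $A_T$.

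\textbf{Step 2: per-step growth.} Two cases, according to the guess.
\begin{itemize}
\item If $\lambda_t\le \lambda'_t$ (the guess overestimates), then $\beta_t=1$. By the choice of $a'_{t+1}$ we have $a_{t+1}^2=A_{t+1}/\lambda'_t$, hence $\sqrt{A_{t+1}}-\sqrt{A_t}\ge 1/(2\sqrt{\lambda'_t})$. This is a ``good'' step.
\item If $\lambda_t>\lambda'_t$ (the guess underestimates), then $\min\{\lambda_t,\lambda'_t\}=\lambda'_t$. By Condition 2, $\lambda_t \ge c(\delta+L_p\Vert \vy_t-\vz_t\Vert^{p-1}/p!)$.
  \begin{itemize}
  \item If $\lambda_t\lesssim\delta$, then, since $\lambda'_s\gtrsim\delta$ by Eq.~(\ref{eq:para-lambda-s}), we get $\lambda_t=\Theta(\lambda'_t)$. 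Then $\beta_t$ is bounded below by a constant and the step is again good up to constants.
  \item Otherwise $\Vert \vy_t-\vz_t\Vert \gtrsim (\lambda_t/L_p)^{1/(p-1)} \ge (\lambda'_t/L_p)^{1/(p-1)}$. Such a step consumes potential at least $A'_{t+1}\lambda'_t(\lambda'_t/L_p)^{2/(p-1)}$.
  \end{itemize}
\end{itemize}

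\textbf{Step 3: epoch analysis.} Fix an epoch starting at $s$, so $A_t\le 2A_s$ and $\lambda'_t=\lambda'_s$ throughout, and $A'_{t+1}\ge A_t\ge A_s$. The choice in Eq.~(\ref{eq:para-lambda-s}) is calibrated so that
\[
A_s\lambda'_s(\lambda'_s/L_p)^{2/(p-1)}\gtrsim D_0/K_s ,
\qquad K_s:=\sqrt{A_s\lambda'_s}.
\]
One checks this by solving $(\lambda')^{(p+1)/(p-1)}\sqrt{A_s}\cdot\sqrt{A_s}\,\sqrt{\lambda'}\sim D_0 L_p^{2/(p-1)}$, which gives exactly the exponent $2(p-1)/(3p+1)$.

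Consequently each epoch contains at most $O(K_s)$ bad steps, by the potential budget $D_0$. Good steps double $A$ after $O(\sqrt{A_s\lambda'_s})=O(K_s)$ of them, since $\sqrt{2A_s}-\sqrt{A_s}\sim\sqrt{A_s}$ and each good step adds $1/(2\sqrt{\lambda'_s})$ to $\sqrt{A}$. Hence every epoch has length $O(K_s)$ and ends with $A$ at least doubled.

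\textbf{Step 4: summing over epochs.} By Eq.~(\ref{eq:para-lambda-s}), $K_s=\sqrt{A_s\lambda'_s}\sim \max\{\sqrt{A_s\delta},\,(A_s^{(p+1)/2}L_p/D_0^{(p-1)/2})^{1/(3p+1)}\}$. Since $A_s$ grows geometrically over epochs, the epoch lengths form a geometric series dominated by the last one. So $T\lesssim K$ evaluated at $A_T$, i.e.
\[
T\lesssim \max\Bigl\{\sqrt{A_T\delta},\ \bigl(A_T^{(p+1)/2}L_p/D_0^{(p-1)/2}\bigr)^{1/(3p+1)}\Bigr\},
\]
which inverts to Eq.~(\ref{eq:final-rate-2nd}).

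\textbf{Main obstacle.} The delicate step is the bad-step case in Step 2. There $\beta_t=\lambda'_t/\lambda_t$ may be tiny, so $A$ may not grow at all, and the monotone-variant replacement of $\vx_{t+1}$ must not spoil the potential (Lemma~\ref{lem:Car} is assumed to cover this). The potential budget must bound the number of such steps per epoch via $A'_{t+1}\ge A_s$, with the $\delta$ floor preventing $\Vert\vy_t-\vz_t\Vert$ from being uninformative. I would first try bounding bad steps using $\lambda_t\ge\lambda'_t$ alone together with $\lambda'_s\ge\Theta(\delta)$, as above. The first epoch with $A_1$ built from $\lambda_0$ also needs separate handling.
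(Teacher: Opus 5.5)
Your route is the paper's. It uses the same epochs from the Guess subroutine, with undamped steps giving $\sqrt{A_{t+1}}-\sqrt{A_t}\ge 1/(2\sqrt{\lambda'_t})$. Damped steps are split by whether $\delta$ or the movement dominates, and the movement-dominated ones are paid for from the budget $D_0$ using $A'_{t+1}\ge A_s$. The proof ends with a sum over epochs. The differences are minor:
\begin{itemize}
\item You absorb the $\delta$-dominated damped steps into the good ones via $\beta_t\gtrsim 1$, which holds since $\lambda'_s\gtrsim\delta$, and you verify the prescribed $\lambda'_s$ directly. The paper instead counts these steps through $a_{t+1}\ge\sqrt{\lambda'_tA_t}/(4\delta)$ and then optimizes $\lambda'$.
\item Your geometric-series summation over epochs is more careful than the paper's appeal to $A_t\le A_T$, which alone would cost a factor equal to the number of epochs.
\end{itemize}

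Two steps need fixing.

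\textbf{First, the movement bound is used in the wrong direction.} Condition 2 of Definition~\ref{dfn:delta-MS} is a lower bound on $\lambda_t$. A lower bound cannot yield $\Vert\vy_t-\vz_t\Vert\gtrsim(\lambda_t/L_p)^{1/(p-1)}$ in your second sub-case. What you need is the reverse inequality $\lambda_t\le C(\delta+L_p\Vert\vy_t-\vz_t\Vert^{p-1}/p!)$. The paper's proof uses exactly this, in the form $\lambda_t=2(\delta+L_p\Vert\vy_t-\vz_t\Vert^{p-1}/p!)$, and both concrete oracles satisfy it (Lemma~\ref{lem:inexact-Newton-step} and Eq.~(\ref{eq:dfn-lambda-t})).

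The same upper bound settles your first-epoch worry. The MS condition plus convexity give $\Vert\vy_0-\vx_0\Vert\le 3\Vert\vx_0-\vx^*\Vert$, so $A_1=1/\lambda_0\gtrsim\min\{1/\delta,\,1/(L_pD_0^{(p-1)/2})\}$. The $O(1)$ per-epoch overhead hidden in ``length $O(K_s)$'' only adds $O(\log(A_T/A_1))$ to $T$. This is harmless: if it dominated, $A_T$ would grow exponentially from $A_1$.

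\textbf{Second, your closed form for $K_s$ is wrong.} The correct expression is $\sqrt{A_s\lambda'_s}\sim\sqrt{\delta A_s}+D_0^{(p-1)/(3p+1)}(L_pA_s)^{2/(3p+1)}$. Likewise, your calibration identity should read $A_s^{3/2}(\lambda'_s)^{(3p+1)/(2(p-1))}\sim D_0L_p^{2/(p-1)}$, not with a single factor $A_s$; the inequality you claim there is nonetheless correct. With your expression, the final inversion does not give Eq.~(\ref{eq:final-rate-2nd}). For $p=2$ it would claim $A_T\gtrsim T^{14/3}$, which would beat the lower bound of Proposition~\ref{prop:lb-convex}. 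With the correct expression, $T^{(3p+1)/2}\lesssim L_pD_0^{(p-1)/2}A_T$ inverts exactly as stated.
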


Below, we provide a proof sketch and demonstrate the difference between prior analyses for optimal exact methods \citep{carmon2022optimal,adil2024convex}. The formal proof is deferred to the appendix.

\begin{proof}[Proof Sketch]
Inequality (\ref{eq:decrease-Car}) in Lemma \ref{lem:Car} indicates that $E_T \le D_0 / A_T$. Therefore, proving the convergence rate of the algorithm suffices to show the growth rate of $A_T$ in inequality (\ref{eq:final-rate-2nd}). Following the analysis in \citet{adil2024convex}, we can give a lower bound of $A_t$ by deriving an upper bound on the number of iterations required for $A_t$ to increase
by a factor of $2$. Let $t_1$ be given and let $t_2$ be the smallest value where $A_{t_2} > 2A_{t_1}$. 
The Guess subroutine fixes $\lambda_{t}' = \lambda_{t_1}' $ in the interval $t \in [t_1,t_2)$.
Let $S^\downarrow$ and $S^{\uparrow}$ be the sets of iterations where $\lambda_t < \lambda'_t$ and its converse  held respectively:
\begin{align*}
    S^{\downarrow} := \{ t \in [t_1,t_2): \lambda_t < \lambda_t' \}~~{\rm and}~~S^{\uparrow} := [t_1,t_2) - S_j^{\rm \downarrow},
\end{align*}
We can show the same upper bound $\vert S^{\downarrow} \vert = \gO( \sqrt{A_{t_1} \lambda_{t_1}'})$ proved by \citet{adil2024convex} still holds for our inexact algorithm. However, their analysis of upper-bounding $\vert S^\uparrow \vert$ fails under Hessian inexactness. Specifically, they
summed-up inequality (\ref{eq:decrease-Car}) to obtain
$$
D_0\geq \frac{3}{8}A_{t+1}'\lambda_t'\|\vy_t-\vz_t\|^2,
$$
then lower-bound $\|y_t-z_t\|$ by $\lambda_t$. However, since $\lambda_t = \delta + L_p/p!\|y_t-z_t\|^{p-1}$ in our setting, it cannot lower-bound $\|y_t-z_t\|$ when $\delta$ is large, which requires a new analysis. To overcome this issue, we develop a novel separation trick to divide $S^{\uparrow}$ by $S_{1}^\uparrow$ and $S_{2}^\uparrow$ to differentiate the cases whether the $\delta$-inexactness or $p$th-order progress dominates in each iteration:
\begin{align*}
    S^\uparrow_\delta:= \{ t \in S^\uparrow: \delta > L_p \Vert \vy_t - \vz_t \Vert^{p-1} / p! \} 
    \quad {\rm and} \quad S^\uparrow_p: = S^\uparrow - S^\uparrow_\delta.
\end{align*}
We then provide different analyses for these two sets individually to obtain the bounds of their sizes:
\begin{align*}
    \vert S^\uparrow_\delta \vert = \gO \left(\delta \sqrt{ \frac{A_{t_1}}{\lambda_{t_1}'}} \right) \quad {\rm and} \quad \vert S^\uparrow_p \vert = \gO \left( \frac{D_0 L_p^{2/(p-1)}}{A_{t_1} (\lambda'_{t_1})^{2p/(p-1)}} \right).
\end{align*}
Finally, we sum up the bounds of $\vert S^\downarrow \vert$, $\vert S_\delta^{\uparrow} \vert$, and $\vert S_p^{\uparrow} \vert$ to obtain the bound of $t_2 - t_1 +1$, and select the guess $\lambda_{t_1}'$ to minimize this upper bound in Eq. (\ref{eq:para-lambda-s}).
\end{proof}
Combining this theorem with Lemma~\ref{lem:inexact-Newton-step}, we can obtain the ISO complexity of AINE for $p=2$.
\begin{cor}[AINE for $p=2$] \label{cor:AINE-p2}
Let Assumption \ref{asm:convex}, \ref{asm:lip-pth-deriv}, and \ref{asm:delta-Hess} hold for any $p =2$. By implementing the second-order $\delta$-MS oracle via the inexact CRN step as Eq. (\ref{eq:inexact-CRN-step}), Algorithm \ref{alg:AINE} can find an $\epsilon$-solution in the inexact second-order oracle (ISO) complexity of $\gO\left( (\delta/ \epsilon)^{1/2} + (L_2/ \epsilon)^{2/7} \right)$.
\end{cor}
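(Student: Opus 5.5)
The plan is to combine Lemma~\ref{lem:inexact-Newton-step} with Theorem~\ref{thm:AINE} at $p=2$, and then invert the rate to obtain an iteration count. The only work beyond that is to check the oracle-call accounting.

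First, Lemma~\ref{lem:inexact-Newton-step} says that the inexact CRN step in Eq.~(\ref{eq:inexact-CRN-step}) returns a pair $(\vy,\lambda)$ satisfying both conditions of Definition~\ref{dfn:delta-MS} with $p=2$. Hence every call $(\vy_t,\lambda_t)=\sO_2^\delta(\vz_t)$ in Algorithm~\ref{alg:AINE} satisfies the MS condition. This is exactly the hypothesis of Lemma~\ref{lem:Car}, so Theorem~\ref{thm:AINE} applies with the guess in Eq.~(\ref{eq:para-lambda-s}) specialized to $p=2$:
\[
\lambda'_s=\Theta\bigl(\delta+(D_0L_2^2/A_s^{3/2})^{2/7}\bigr).
\]
Substituting $p=2$ into Eq.~(\ref{eq:final-rate-2nd}) gives
\[
E_T=\gO\bigl(\delta D_0/T^2+L_2D_0^{3/2}/T^{7/2}\bigr).
\]

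Second, I make each term at most $\epsilon/2$. The first term requires $T\gtrsim(\delta D_0/\epsilon)^{1/2}$, and the second requires $T\gtrsim(L_2D_0^{3/2}/\epsilon)^{2/7}$. So
\[
T=\gO\bigl((\delta/\epsilon)^{1/2}+(L_2/\epsilon)^{2/7}\bigr)
\]
iterations suffice. Here $D_0=\tfrac12\Vert\vx_0-\vx^*\Vert^2$ is treated as a constant absorbed into the $\gO$, following the convention of the statement and Table~\ref{tab:main-res}.

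Third, I count inexact Hessian queries. Each iteration $t\ge1$ makes exactly one oracle call, and the initialization makes one more, at $\vz_0$. Each call to Eq.~(\ref{eq:inexact-CRN-step}) evaluates $\mH(\cdot)$ only once, at its input point; solving the cubic subproblem afterwards needs no further Hessian calls. The Guess subroutine and the remaining updates use only gradients and stored scalars. The total ISO count is therefore $T+1$, which gives the stated complexity.

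The only point needing care is to confirm that the monotone option at Line~\ref{line:monotone-MS-option} is not triggered for $p=2$, which the algorithm makes explicit. I therefore expect no real obstacle: the corollary is a direct specialization of Theorem~\ref{thm:AINE}.
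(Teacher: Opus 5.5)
Your proposal is correct and follows the same route as the paper. The paper also obtains the corollary in one line: Lemma~\ref{lem:inexact-Newton-step} certifies the inexact CRN step as a valid $\sO_2^\delta$ oracle, whose $\lambda = 2(\delta + L_2\Vert \vy-\vx\Vert/2!)$ is exactly the form used in the proof of Theorem~\ref{thm:AINE}, and the paper then inverts $E_T = \gO(\delta D_0/T^2 + L_2 D_0^{3/2}/T^{7/2})$ at $p=2$. One trivial slip: the loop calls the oracle only for $1 \le t \le T-1$, so with the initial call at $\vz_0$ the ISO count is $T$ rather than $T+1$, which does not affect the bound.
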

\section{Second-Order Implementation of the Third-Order Scheme} 

In this section, we show that the AINE method can be applied beyond Hessian-Lipschitz objectives (Assumption \ref{asm:lip-pth-deriv} with $p=2$), resulting in a faster inexact second-order method under higher-order smoothness (Assumption \ref{asm:lip-pth-deriv} with $p=3$). Our main idea is to generalize the superfast exact second-order method \citep{nesterov2021implementable,nesterov2021inexact,nesterov2021superfast} to inexact oracles. We follow \citep{nesterov2021implementable,nesterov2021inexact,nesterov2021superfast} to establish the relative non-degeneracy condition \citep{bauschke2017descent,lu2018relatively} of a third-order proximal function, then apply a Bregman distance gradient method (BDGM) subroutine \citep{lu2018relatively,nesterov2021implementable,nesterov2021superfast} to solve the proximal subproblem using inexact second-order oracles.

\begin{algorithm}[t]  
\caption{Implement $\sO_3^\delta(\vz_t)$ via BDGM \citep{lu2018relatively,nesterov2021superfast}}  \label{alg:BGDM-for-prox-3}
\begin{algorithmic}[1] 
\State $k=0$, $\vy_{t,0} = \vz_t$
\State Define $f_t$ as Eq. (\ref{eq:prox-func-ft}), $\rho_t$ as Eq. (\ref{eq:scaling-func-rhot}), and $R_t$ as (\ref{eq:dfn-Rt}).
\State \textbf{while} $(\vy_{t,k},\lambda_{t,k})$ does not satisfy the MS condition (part 1 of Definition \ref{dfn:delta-MS})
\State \quad $\vy_{t,k+1} = \arg \min_{\Vert \vy - \vz_t \Vert \le R_t}  \left\{  \langle \nabla f_t(\vy_{t,k}), \vy - \vy_{t,k} \rangle + 2 (1+1/\xi) \beta_\rho(\vy_{t,k},\vy) \right\} $ 
\State \quad $k \leftarrow k+1$
\State \textbf{end while}
\State \textbf{return} $(\vy_t,\lambda_t) = (\vy_{t,k}, \lambda_{t,k})$
\end{algorithmic}
\end{algorithm}

\subsection{Bregman Distance Gradient Method}

This section first introduces the necessary backgrounds on the 
relative non-degeneracy condition \citep{bauschke2017descent,lu2018relatively} and the Bregman distance gradient method (BDGM) subroutine \citep{lu2018relatively,nesterov2021implementable,nesterov2021superfast} for the following constrained convex minimization problem:
\begin{align*}
    \min_{\vy \in \gY}\varphi(\vy).
\end{align*}
Let $\beta_\varphi(\vy,\vy') = \varphi(\vy') - \varphi(\vy) - \langle \nabla \varphi(\vy) , \vy' - \vy \rangle $ be the Bregman distance induced by $\varphi$. The relative non-degeneracy condition and the condition number are formally defined as follows.

\begin{asm} \label{asm:relative}
Assume that $\varphi: \gY \rightarrow \sR$ is non-degenerate with respect to a scaling function $\rho: \gY \rightarrow \sR$:
\begin{align*}
    \mu_\rho(\varphi) \beta_\rho(\vy,\vy') \le \beta_\varphi(\vy,\vy') \le L_\rho(\varphi) \beta_\rho(\vy,\vy'), \quad \forall \vy, \vy' \in \gY,
\end{align*}
where $0 \le \mu_\rho(\varphi) \le L_\rho(\varphi)$. When both $\varphi(\,\cdot\,)$ and $\rho(\,\cdot\,)$ are twice differentiable, the assumption is equivalent to $ \mu_\rho(\varphi) \nabla^2 \rho(\vy) \preceq \nabla^2 \varphi(\vy) \preceq L_\rho(\varphi) \nabla^2 \rho(\vy) $ for all $\vy \in \gY$.
\end{asm}

\begin{dfn} \label{dfn:condition-numer-rho}
Under Assumption \ref{asm:relative}, we denote $\kappa_\rho(\varphi) \triangleq \mu_{\rho}(\varphi) / L_\rho(\varphi)$ as the condition number of the function $\varphi(\,\cdot\,)$ with respect to the scaling function $\rho(\,\cdot\,)$.
\end{dfn}

When the functions satisfy Assumption \ref{asm:relative}, the BGDM updates guaranty a linear convergence rate to the minimizer, as stated in the following lemma.

\begin{lem}[{\citet[Eq. 40]{nesterov2021superfast}}] \label{lem:BDGM}
Under Assumption \ref{asm:relative}, iteratively conducts the update
\begin{align} \label{eq:BDGM-update}
    \vy_{k+1} = \arg \min_{\vy \in \gY} \left\{  \langle \nabla \varphi(\vy_k), \vy - \vy_k \rangle + 2 L_\rho(\varphi) \beta_\rho(\vy_k,\vy) \right\},
\end{align}
ensures a linear convergence to the minimizer of $\vy^* = \arg \min_{\vy \in \gY} \varphi(\vy)$ with the convergence rate:
\begin{align*}
    \min_{0 \le k \le K} \varphi(\vy_k) - \varphi(\vy^*) \le \frac{\mu_\rho(\varphi)}{2} \left( 1- \frac{\kappa_{\rho}(\varphi)}{4} \right)^K \beta_\rho(\vy_0, \vy^*), \quad \forall K \in \sN_+.
\end{align*}
\end{lem}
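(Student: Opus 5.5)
The plan is to follow the standard relative-smoothness analysis of \citet{lu2018relatively}. It has three parts: a one-step descent inequality, a weighted telescoping, and a final conversion of the telescoped sum into a bound on $\min_{0\le k\le K}\varphi(\vy_k)-\varphi(\vy^*)$. Throughout, write $L=L_\rho(\varphi)$, $\mu=\mu_\rho(\varphi)$, $\kappa=\mu/L$, $r_k=\varphi(\vy_k)-\varphi(\vy^*)\ge 0$ and $\beta_k=\beta_\rho(\vy_k,\vy^*)$. The third part, matching the stated prefactor $\mu/2$, is the one I have not closed.

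\emph{Step 1 (one-step inequality).} First, apply the upper bound in Assumption~\ref{asm:relative} to get
\begin{align*}
\varphi(\vy_{k+1})\le \varphi(\vy_k)+\langle\nabla\varphi(\vy_k),\vy_{k+1}-\vy_k\rangle+L\beta_\rho(\vy_k,\vy_{k+1}).
\end{align*}
The right-hand side is at most the model minimized in (\ref{eq:BDGM-update}), since the coefficient there is $2L\ge L$. Second, use the three-point property of Bregman proximal steps (the first-order optimality of $\vy_{k+1}$ over $\gY$): for every $\vy\in\gY$,
\begin{align*}
\langle\nabla\varphi(\vy_k),\vy_{k+1}-\vy_k\rangle+2L\beta_\rho(\vy_k,\vy_{k+1})\le\langle\nabla\varphi(\vy_k),\vy-\vy_k\rangle+2L\beta_\rho(\vy_k,\vy)-2L\beta_\rho(\vy_{k+1},\vy).
\end{align*}
Third, take $\vy=\vy^*$ and use the lower bound in Assumption~\ref{asm:relative}, namely $\varphi(\vy_k)+\langle\nabla\varphi(\vy_k),\vy^*-\vy_k\rangle\le\varphi(\vy^*)-\mu\beta_k$. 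With $w:=1-\kappa/2$, these three facts combine to
\begin{align*}
r_{k+1}+2L\beta_{k+1}\le(2L-\mu)\beta_k=2Lw\,\beta_k .
\end{align*}

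\emph{Step 2 (weighted telescoping).} Set $w':=1-\kappa/4$, which is the base of the rate in the statement, and note $w\le w'$. Multiply the Step 1 inequality by $w'^{-(k+1)}$ and sum over $k=0,\dots,K-1$. The $\beta$ terms telescope, with an extra nonnegative slack of size $(1-w/w')\sum_k w'^{-k}\,2L\beta_k$ left over. This gives
\begin{align*}
\sum_{k=1}^K w'^{-k}r_k+\Bigl(1-\tfrac{w}{w'}\Bigr)\sum_{k=0}^{K-1}w'^{-k}\,2L\beta_k\le 2L\beta_0 .
\end{align*}
Dropping the slack and bounding each $r_k$ below by the minimum yields
\begin{align*}
\min_{1\le k\le K} r_k\le \frac{2L\beta_0}{\sum_{k=1}^K w'^{-k}}=\frac{\kappa}{4}\,\frac{w'^K}{1-w'^K}\,2L\beta_0=\frac{\mu}{2}\,\frac{w'^K}{1-w'^K}\,\beta_0 .
\end{align*}
This already has the required prefactor $\mu/2$ and the required base $1-\kappa/4$.

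\emph{Step 3 (main obstacle).} What remains is to remove the factor $1/(1-w'^K)$. For large $K$ this factor is at most $2$, but for small $K$ it is of order $1/(K\kappa)$. I do not have a complete argument for this step; I would try two routes, in this order.

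\begin{enumerate}
\item Extend the minimum to include $k=0$, as the statement allows, and bound $r_0$ directly. The first-order optimality of $\vy^*$ gives $\langle\nabla\varphi(\vy^*),\vy_0-\vy^*\rangle\ge0$, hence $r_0\le L\beta_\rho(\vy^*,\vy_0)$. The catch is that this Bregman distance is taken in the opposite direction from $\beta_0$, so it is not directly comparable.
\item Use the slack term discarded in Step 2, which contributes a positive multiple of $\kappa\sum_k w'^{-k}L\beta_k$. Together with the reverse-direction inequality $r_k\ge\mu\beta_\rho(\vy^*,\vy_k)$, this could absorb the missing factor.
\end{enumerate}

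I expect this comparison between $\beta_\rho(\vy,\vy^*)$ and $\beta_\rho(\vy^*,\vy)$ to be the genuinely delicate point. If neither route closes it, the honest conclusion of this plan is the bound displayed at the end of Step 2. That bound coincides with the stated one up to the factor $(1-w'^K)^{-1}$, which is at most $2$ as soon as $K\ge 4\ln 2/\kappa$.
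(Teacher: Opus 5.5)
Your Steps 1 and 2 are correct. The one-step inequality $r_{k+1}+2L\beta_{k+1}\le 2L(1-\kappa/2)\beta_k$ holds, and so does the weighted telescoping (both implicitly use convexity of $\rho$ and of $\gY$, which you should state). Together they give $\min_{1\le k\le K} r_k\le \frac{\mu}{2}\beta_0/\bigl((1-\kappa/4)^{-K}-1\bigr)$.

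The gap is Step 3, and no route will close it, because the inequality as printed is false for small $K$. Take $\gY=\sR^2$, $\rho(\vy)=\frac12\Vert\vy\Vert^2$ and $\varphi(\vy)=\frac12(Ly_1^2+\mu y_2^2)$ with $0<\mu<L$. Assumption~\ref{asm:relative} then holds with exactly these constants, $\vy^*=\vzero$, and the update is the gradient step $\vy_{k+1}=\vy_k-\nabla\varphi(\vy_k)/(2L)$. From $\vy_0=(0,1)$ you get $\vy_k=(0,(1-\kappa/2)^k)$. Hence $r_k=\frac{\mu}{2}(1-\kappa/2)^{2k}$ is decreasing and $\beta_\rho(\vy_0,\vy^*)=\frac12$. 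The claim then reads $2(1-\kappa/2)^{2K}\le(1-\kappa/4)^K$. This fails at $K=1$, $\kappa=1/2$ (since $9/8>7/8$), and for every fixed $K$ once $\kappa$ is small enough.

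The factor is also not just a constant. Add a third coordinate with curvature $\lambda=L/K$ (admissible when $K\kappa\le 1$) and start on that coordinate axis. Then $r_K=\frac{\lambda}{2}(1-\frac{1}{2K})^{2K}\ge L/(8K)$, while the claim allows at most $\mu/4$. The violation is therefore by a factor $1/(2K\kappa)$, the same order as your $(1-w'^K)^{-1}\approx 4/(K\kappa)$. So that factor is genuinely needed, and both of your routes were bound to fail.

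The paper does not prove this lemma itself; it only cites Nesterov. The printed version appears to have dropped the $-1$ from a denominator of the form $(1-\kappa/4)^{-K}-1$. What you should prove and state is your Step 2 bound. Telescoping with $w$ itself instead of $w'$ gives the slightly better $\min_{1\le k\le K}r_k\le\mu\beta_0/\bigl((1-\kappa/2)^{-K}-1\bigr)$.

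For the paper's only use of the lemma, in the proof of Theorem~\ref{thm:superfast-implementation}, the discrepancy is harmless. There $\kappa=(\xi-1)/(\xi+1)$ depends only on the numerical constant $\xi$. As you noted, the extra factor is at most $2$ once $K\ge 4\ln 2/\kappa$, so the polylogarithmic iteration count is unchanged.
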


\subsection{Implementing the Third-Order Inexact MS Oracle}

To implement the third-order inexact MS oracle $\sO_3^{\delta}(\vz_t)$, we first note that Condition 2 of Definition~\ref{dfn:delta-MS} can be automatically satisfied by setting
\begin{align} \label{eq:dfn-lambda-t}
    \lambda_t=   \frac{\xi^2-1}{\xi} \delta + \frac{(1+\xi) \xi L_3}{6} \Vert \vy_t - \vz_t \Vert^{2},
\end{align}
where $\xi>0$ is a numerical constant, and solving the proximal function:
\begin{align} \label{eq:prox-func-ft}
   \min_{\vy \in \sR^d} \left\{f_t(\vy) \triangleq f(\vy) +  \frac{(\xi^2-1) \delta}{2\xi} \Vert \vy - \vz_t \Vert^{2}
    +  \frac{(1+\xi) \xi L_3}{24} \Vert \vy - \vz_t \Vert^{4} \right\}.
\end{align}
Consequently, if we can additionally satisfy (Condition 1 of Definition \ref{dfn:delta-MS}), then we can return a valid MS oracle, which can guarantee the fast convergence rate as a third-order method as indicated by Theorem \ref{thm:AINE}. To fulfill Condition 1 efficiently, we leverage the following relative smoothness.

\begin{lem} \label{lem:relative-smooth}
Under Assumption \ref{asm:convex}, \ref{asm:lip-pth-deriv}, and \ref{asm:delta-Hess} for $p=3$,
$f_t(\vy)$ is non-degenerate with respect to the scaling function:
\begin{align} \label{eq:scaling-func-rhot}
    \rho_t(\vy) = \frac{1}{2} \langle(\mH(\vz_t) + \delta \mI_d) (\vy - \vz_t), \vy - \vz_t \rangle + \frac{(\xi + \xi^2) L_3}{12} \Vert \vy - \vz_t \Vert^4,
\end{align}
and we have $(1 - 1/\xi) \nabla^2 \rho_t(\vy) \preceq \nabla^2 f(\vy) \preceq (1+1/\xi) \nabla^2 \rho_t(\vy)$.
\end{lem}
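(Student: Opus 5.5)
Throughout, write $\vh = \vy - \vz_t$. I read the conclusion as a statement about $\nabla^2 f_t$, since $f_t$ is the function that appears in the non-degeneracy claim. I would therefore aim to prove
\[
(1-1/\xi)\nabla^2\rho_t(\vy) \preceq \nabla^2 f_t(\vy) \preceq (1+1/\xi)\nabla^2\rho_t(\vy) \quad \text{for all } \vy,
\]
assuming $\xi \ge 1$. The plan follows the argument of \citet{nesterov2021implementable,nesterov2021superfast}. First sandwich $\nabla^2 f(\vy)$ between multiples of $\nabla^2 f(\vz_t)$ plus a multiple of $L_3\Vert\vh\Vert^2\mI_d$. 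Then swap $\nabla^2 f(\vz_t)$ for $\mH(\vz_t)$ using Assumption \ref{asm:delta-Hess}. Finally absorb every leftover term into the regularizers of $f_t$ and $\rho_t$.

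\textbf{Step 1: bounding the third derivative by convexity.} By Assumption \ref{asm:lip-pth-deriv} with $p=3$, for every $\tau\in\sR$,
\[
\nabla^2 f(\vz_t+\tau\vh) \preceq \nabla^2 f(\vz_t) + \tau\nabla^3 f(\vz_t)[\vh] + \tfrac{\tau^2 L_3}{2}\Vert\vh\Vert^2\mI_d .
\]
Convexity gives $\nabla^2 f(\vz_t+\tau\vh)\succeq 0$. Taking $\tau=\pm\xi$ and dividing by $\xi$ yields
\[
\pm\nabla^3 f(\vz_t)[\vh] \preceq \tfrac{1}{\xi}\nabla^2 f(\vz_t) + \tfrac{\xi L_3}{2}\Vert\vh\Vert^2\mI_d .
\]

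\textbf{Step 2: sandwiching $\nabla^2 f(\vy)$.} Plug this into the two-sided Taylor bound $\nabla^2 f(\vy) = \nabla^2 f(\vz_t)+\nabla^3 f(\vz_t)[\vh] \pm \tfrac{L_3}{2}\Vert\vh\Vert^2\mI_d$. This gives
\[
(1-\tfrac1\xi)\nabla^2 f(\vz_t) - \tfrac{(1+\xi)L_3}{2}\Vert\vh\Vert^2\mI_d \preceq \nabla^2 f(\vy) \preceq (1+\tfrac1\xi)\nabla^2 f(\vz_t) + \tfrac{(1+\xi)L_3}{2}\Vert\vh\Vert^2\mI_d .
\]
Next use $\mH(\vz_t)-\delta\mI_d \preceq \nabla^2 f(\vz_t)\preceq \mH(\vz_t)+\delta\mI_d$. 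This expresses both sides through $\mH(\vz_t)+\delta\mI_d$, at the cost of $O(\delta)$ multiples of $\mI_d$.

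\textbf{Step 3: absorbing the quartic terms.} Set $M=\nabla^2\Vert\vh\Vert^4 = 4\Vert\vh\Vert^2\mI_d+8\vh\vh^\top$, and note $M\succeq 4\Vert\vh\Vert^2\mI_d$. The quartic coefficient is $\tfrac{(1+\xi)\xi L_3}{24}$ in $f_t$ and $\tfrac{(\xi+\xi^2)L_3}{12}$ in $\rho_t$.

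For the upper bound we need
\[
\tfrac{(1+\xi)L_3}{2}\Vert\vh\Vert^2\mI_d + \tfrac{(1+\xi)\xi L_3}{24}M \preceq \tfrac{(1+\xi)^2L_3}{12}M .
\]
The difference of the $M$-coefficients is $\tfrac{(1+\xi)(2+\xi)L_3}{24}$. Using $M\succeq4\Vert\vh\Vert^2\mI_d$, it suffices that $(2+\xi)/6 \ge 1/2$, which holds for $\xi\ge1$. The lower bound is checked the same way, using the smaller multiplier $1-1/\xi$ on the $\rho_t$ side.

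\textbf{Main obstacle: the $\delta$ bookkeeping.} Matching the $\delta\mI_d$ terms is the delicate step, and I would do it last. The quadratic term $\tfrac{(\xi^2-1)\delta}{\xi}\mI_d$ in $\nabla^2 f_t$ must compensate for replacing $\nabla^2 f(\vz_t)$ by $\mH(\vz_t)$. On the lower side, $(1-\tfrac1\xi)(\mH(\vz_t)-\delta\mI_d)$ must be lifted to $(1-\tfrac1\xi)(\mH(\vz_t)+\delta\mI_d)$. This needs an extra $2(1-\tfrac1\xi)\delta\mI_d$, which $\tfrac{\xi^2-1}{\xi}\delta\mI_d$ supplies when $\xi\ge2$. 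On the upper side, the same term adds $\delta$-mass beyond $(1+\tfrac1\xi)\delta\mI_d$.

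I would first try to compare $f_t$ with $\rho_t$ directly. If the exact constants do not close, I would rescale the $\delta$-coefficient in $\rho_t$, or weaken the ratio to $(1\pm c/\xi)$. Lemma \ref{lem:BDGM} only needs some bounded condition number $\kappa_{\rho_t}(f_t)=\Theta(1)$, so that weakening is harmless for the complexity of Algorithm \ref{alg:BGDM-for-prox-3}.
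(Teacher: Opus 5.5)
You follow the same route as the paper: Nesterov's convexity bound on $\nabla^3 f(\vz_t)[\vh]$, the two-sided sandwich of $\nabla^2 f(\vy)$, swapping $\nabla^2 f(\vz_t)$ for $\mH(\vz_t)$, and absorbing the leftovers into the regularizers. Steps 1--2 and your upper quartic check are correct.

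\textbf{The quartic lower bound fails.} The gap is the sentence ``the lower bound is checked the same way''; it is not. On the lower side, the surplus $M$-coefficient is
\[
\tfrac{(1+\xi)\xi L_3}{24}-\bigl(1-\tfrac1\xi\bigr)\tfrac{(\xi+\xi^2)L_3}{12}=\tfrac{(1+\xi)(2-\xi)L_3}{24}.
\]
In directions orthogonal to $\vh$, where $M$ acts as $4\Vert\vh\Vert^2$, this supplies only $\tfrac{(1+\xi)(2-\xi)L_3}{6}\Vert\vh\Vert^2$. You need $\tfrac{(1+\xi)L_3}{2}\Vert\vh\Vert^2$ to cancel the Step~2 error, so the margin is too small for every $\xi>0$, and it is negative for $\xi>2$. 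This is not just a weakness of the argument. Take $f\equiv0$ and $\mH\equiv\mO_d$, both admissible. Then
\[
\nabla^2 f_t(\vy)-\bigl(1-\tfrac1\xi\bigr)\nabla^2\rho_t(\vy)=(\xi-1)\delta\mI_d-\tfrac{(1+\xi)(\xi-2)L_3}{24}M,
\]
which is not positive semidefinite once $\Vert\vh\Vert$ is large. So for $\xi>2$ the stated lower bound is false, and for $\xi\le2$ your argument does not reach it.

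\textbf{The $\delta$ term and a repair.} Your $\delta$ worry is also real, and it has a definite answer. With $\delta\mI_d$ in $\rho_t$ the \emph{upper} bound is false: take $f\equiv0$, $\mH(\vz_t)=-\delta\mI_d$ and $\vy=\vz_t$, which gives $\nabla^2 f_t=\tfrac{\xi^2-1}{\xi}\delta\mI_d$ but $\nabla^2\rho_t=\mO_d$. The fix is to put $\mH(\vz_t)+\xi\delta\mI_d$ in $\rho_t$, which is what the paper's proof silently writes for $\nabla^2\rho_t$. With that change, both $\delta$-terms match exactly for $\xi\ge1$; your ``$\xi\ge2$'' should read $\xi\ge1$.

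The paper's proof does not close the quartic lower bound either. It treats the quartic part of $\nabla^2\rho_t$ as $(\xi+\xi^2)L_3\Vert\vh\Vert^2\mI_d$, which is not its Hessian, so there is nothing to borrow from it.

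A repair inside your framework: use $\mH(\vz_t)+\xi\delta\mI_d$ in $\rho_t$, keep its quartic coefficient, take $\xi>3$, and bound $-\Vert\vh\Vert^2\mI_d\succeq-\tfrac14M$. This gives
\[
\nabla^2 f_t(\vy)\succeq\bigl(1-\tfrac1\xi\bigr)\bigl(\mH(\vz_t)+\xi\delta\mI_d\bigr)+\tfrac{(1+\xi)(\xi-3)L_3}{24}M\succeq\tfrac{\xi-3}{2\xi}\nabla^2\rho_t(\vy),
\]
where the last step uses $\mH(\vz_t)+\xi\delta\mI_d\succeq(\xi-1)\delta\mI_d\succeq0$. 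Together with your upper bound, this yields $\kappa_{\rho_t}(f_t)=\tfrac{\xi-3}{2(\xi+1)}=\Theta(1)$, which is all Lemma~\ref{lem:BDGM} needs. The upper constant $1+1/\xi$ is unchanged, and so is the step factor in Algorithm~\ref{alg:BGDM-for-prox-3}. This is your ``weaken the ratio'' fallback, but it has to be applied to the lower constant, and it requires $\xi>3$.
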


Although implementing the MS oracle is stating as minimizing the unconstrained optimization\linebreak
$\min_{\vy \in \sR^d} f_t(\vy)$, it is more convenient to work with an equivalent constraint form on a compact set $\gY_t$ for the stability of the process. It is possible by the following lemma.

\begin{lem} \label{lem:prox-funct-ystar-bounded}
Under Assumption \ref{asm:convex}, \ref{asm:lip-pth-deriv}, and \ref{asm:delta-Hess} for $p=3$, for $\vy_t^* = \arg \min_{\vy \in \sR^d} f_t(\vy)$, we have
\begin{align} \label{eq:dfn-Rt}
        \Vert \vz_t - \vy_t^* \Vert \le 2 \left(\frac{12 \Vert \nabla f(\vz_t) \Vert}{L_3 (\xi^2-1)} \right)^{1/3} \triangleq R_t.
\end{align}
\end{lem}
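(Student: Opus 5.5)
The plan is to use the first-order optimality condition for $\vy_t^*$ and convexity of $f$ to extract a one-dimensional inequality in $r := \Vert \vy_t^* - \vz_t \Vert$. Since $f_t$ is convex and coercive (the quartic term dominates), a minimizer $\vy_t^*$ exists, and it satisfies
\begin{align*}
    \nabla f(\vy_t^*) + \frac{(\xi^2-1)\delta}{\xi} (\vy_t^* - \vz_t) + \frac{(1+\xi)\xi L_3}{6} \Vert \vy_t^* - \vz_t \Vert^2 (\vy_t^* - \vz_t) = 0 .
\end{align*}

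Next I take the inner product of this identity with $\vy_t^* - \vz_t$. Monotonicity of $\nabla f$ (Assumption \ref{asm:convex}) gives
\[
\langle \nabla f(\vy_t^*), \vy_t^* - \vz_t \rangle \ge \langle \nabla f(\vz_t), \vy_t^* - \vz_t \rangle \ge -\Vert \nabla f(\vz_t) \Vert\, r .
\]
The quadratic term $\frac{(\xi^2-1)\delta}{\xi} r^2$ is nonnegative, provided $\xi \ge 1$, which is implicit since $R_t$ requires $\xi^2 - 1 > 0$. Hence I can drop it and obtain
\[
\frac{(1+\xi)\xi L_3}{6} r^4 \le \Vert \nabla f(\vz_t) \Vert\, r ,
\]
that is, $r^3 \le 6\Vert \nabla f(\vz_t)\Vert / ((1+\xi)\xi L_3)$.

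The last step is to compare this with $R_t^3 = 8 \cdot 12 \Vert \nabla f(\vz_t)\Vert / (L_3(\xi^2-1))$. It suffices to check that
\[
\frac{6}{\xi(\xi+1)} \le \frac{96}{(\xi-1)(\xi+1)} \iff 6(\xi-1) \le 96\xi ,
\]
which holds trivially for all $\xi > 1$. This yields $r \le R_t$.

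No step is a genuine obstacle. The only care points are making sure $\xi > 1$, so that the quadratic regularizer has a nonnegative coefficient and $R_t$ is well defined, and dropping the $\delta$-term rather than trying to use it. Notably, the bound needs neither Assumption \ref{asm:lip-pth-deriv} nor Assumption \ref{asm:delta-Hess}; it relies only on convexity, and the constant $2\cdot 12^{1/3}$ in $R_t$ is loose.
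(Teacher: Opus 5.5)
Your proof is correct, but it takes a different route from the paper's. Both arguments bound $\langle \nabla f(\vz_t), \vz_t - \vy_t^* \rangle = \langle \nabla f_t(\vz_t) - \nabla f_t(\vy_t^*), \vz_t - \vy_t^* \rangle$ from below by a multiple of $\Vert \vz_t - \vy_t^* \Vert^4$ and then apply Cauchy--Schwarz; they differ in how this lower bound is obtained.

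The paper writes the quantity as the symmetrized Bregman divergence $\beta_{f_t}(\vy_t^*,\vz_t)+\beta_{f_t}(\vz_t,\vy_t^*)$. It then passes to $(1-1/\xi)$ times the corresponding divergence of $\rho_t$, using the relative strong convexity of Lemma~\ref{lem:relative-smooth}. Next it discards the quadratic part of $\rho_t$, which needs $\mH(\vz_t)+\delta\mI \succeq 0$, i.e.\ Assumption~\ref{asm:delta-Hess} plus convexity. It finishes with the uniform convexity $\beta_{d_4}(\vy,\vy') \ge \frac{1}{16}\Vert \vy - \vy' \Vert^4$ of $d_4 = \Vert\cdot\Vert^4/4$. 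The constants lost along this chain produce exactly the $96/((\xi^2-1)L_3)$ in $R_t^3$.

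You instead split $f_t$ into $f$ plus its explicit regularizers and use only monotonicity of $\nabla f$ for the first piece. You read off the regularizers' contribution, $\frac{(1+\xi)\xi L_3}{6}r^4$ plus a nonnegative quadratic term, exactly from the optimality condition. This gives the sharper bound $r^3 \le 6\Vert \nabla f(\vz_t) \Vert/(\xi(\xi+1)L_3)$ and shows that the factor $\xi^2-1$ in the denominator of $R_t$ is an artifact of the paper's route.

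The paper's argument fits Nesterov's template and the relative non-degeneracy machinery that the BDGM analysis uses anyway. Yours is shorter, needs neither Assumption~\ref{asm:lip-pth-deriv} nor Assumption~\ref{asm:delta-Hess}, and is independent of Lemma~\ref{lem:relative-smooth}. That independence is a real advantage, because the Hessian bookkeeping in that lemma's proof does not match the stated $\rho_t$. With $\vh = \vy - \vz_t$, one has $\nabla^2 \rho_t(\vy) = \mH(\vz_t) + \delta\mI + \frac{(\xi+\xi^2)L_3}{3}(\Vert\vh\Vert^2\mI + 2\vh\vh^\top)$, not the expression $\mH(\vz_t) + \xi\delta\mI + (\xi+\xi^2)L_3\Vert\vh\Vert^2\mI$ used there. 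Your self-contained argument is therefore the safer justification of this lemma.
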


In view of the above lemma, we can replace Problem (\ref{eq:prox-func-ft}) by the following constrained one:
\begin{align} \label{prob:constrained-prox-ft}
    \min_{\vy \in \gY_t} (\vy), \quad {\rm where} \quad \gY_t \triangleq \{ \vy \in \sR^d: \Vert \vy - \vz_t \Vert \le R_t \}
\end{align}
We present the BDGM subroutine applied on Problem (\ref{prob:constrained-prox-ft}) in Algorithm \ref{alg:BGDM-for-prox-3}. This subroutine terminates when $(\vy_k,\lambda_k)$ satisfies the MS condition and then returns $(\vy_t,\lambda_t) = (\vy_k, \lambda_k)$ as the third-order $\delta$-MS oracle for the query point $\vz_t$. And we will present a global complexity bound for each call of the BDGM subroutine in Algorithm~\ref{alg:AINE} in the next section.

\subsection{Global Complexity Bound}
Let $R(\vx_0) = \sup_{\vy \in \sR^d}\{ \Vert \vy - \vx^* \Vert: f(\vy) \le f(\vx_0) \}$ be the size of the sub-level set. The monotone MS acceleration variant we use in Algorithm \ref{alg:AINE} for $p=3$ ensures that all the generated iterates lie in the sub-level set, and then we can use the following lemma to obtain global upper bounds for their gradient and Hessian norms.

\begin{lem}[{\citet[Lemma 5.1]{nesterov2021superfast}}] \label{lem:bound-grad-hess-l3}
If Assumption \ref{asm:lip-pth-deriv} holds for $p=3$, then
for any $\vx \in \{\vx \in \sR^d: \Vert \vx - \vx_0 \Vert \le D \}$, we have
\begin{align} \label{eq:bound-grad-hess-l3}
\begin{split}
\Vert \nabla f(\vx) \Vert \le& \Vert \nabla f(\vx_0) \Vert+ \Vert \nabla^2 f(\vx_0) \Vert + \frac{1}{2} \Vert \nabla^3 f(\vx_0) \Vert D^2 + \frac{1}{6} L_3 D^3, \\
\Vert \nabla^2 f(\vx) \Vert \le& \Vert \nabla^2 f(\vx_0) \Vert + \Vert \nabla^3 f(\vx_0) \Vert D + \frac{1}{2} L_3 D^2.
\end{split}
\end{align}
\end{lem}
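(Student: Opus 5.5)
The plan is to Taylor-expand $\nabla f$ and $\nabla^2 f$ around $\vx_0$ with integral remainders, and to use Assumption \ref{asm:lip-pth-deriv} with $p=3$ to bound each remainder. Fix $\vx$ with $\Vert \vx - \vx_0 \Vert \le D$ and set $\vh = \vx - \vx_0$. All operator norms are the ones used in the paper, so $\Vert T[\vh]\Vert \le \Vert T\Vert \Vert \vh\Vert$ for a tensor $T$, and similarly $\Vert T[\vh,\vh]\Vert \le \Vert T\Vert\Vert\vh\Vert^2$.

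\emph{Hessian bound.} Since $f$ is three times continuously differentiable, the fundamental theorem of calculus along the segment gives
\begin{align*}
\nabla^2 f(\vx) = \nabla^2 f(\vx_0) + \nabla^3 f(\vx_0)[\vh] + \int_0^1 \big(\nabla^3 f(\vx_0+\tau\vh) - \nabla^3 f(\vx_0)\big)[\vh]\, d\tau .
\end{align*}
By $L_3$-Lipschitzness, the integrand has norm at most $L_3 \tau \Vert\vh\Vert^2$, so the integral is bounded by $L_3\Vert\vh\Vert^2/2$. The triangle inequality and $\Vert\vh\Vert\le D$ then yield the second line of (\ref{eq:bound-grad-hess-l3}).

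\emph{Gradient bound.} I would use the second-order Taylor expansion of $\nabla f$ with integral remainder:
\begin{align*}
\nabla f(\vx) = \nabla f(\vx_0) + \nabla^2 f(\vx_0)\vh + \tfrac12 \nabla^3 f(\vx_0)[\vh,\vh] + \int_0^1 (1-\tau)\big(\nabla^3 f(\vx_0+\tau\vh)-\nabla^3 f(\vx_0)\big)[\vh,\vh]\,d\tau .
\end{align*}
The remainder is at most $\int_0^1 (1-\tau)\tau\, L_3\Vert\vh\Vert^3 d\tau = L_3\Vert\vh\Vert^3/6$. This explains the constant $1/6$.

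The only step needing care is the linear term. The expansion gives $\Vert\nabla^2 f(\vx_0)\Vert\,D$, whereas the displayed statement writes $\Vert\nabla^2 f(\vx_0)\Vert$ with no factor of $D$. I would read this as a typo for $\Vert \nabla^2 f(\vx_0)\Vert D$, which is the form in \citet{nesterov2021superfast} and the one the argument actually proves. Written as displayed, the bound follows only when $D\le 1$, since then $\Vert\nabla^2 f(\vx_0)\Vert D \le \Vert\nabla^2 f(\vx_0)\Vert$. For general $D$ I would state the corrected version, which is all that is needed later, because $D$ will be taken as a sub-level-set radius like $R(\vx_0)$. Everything else is routine.
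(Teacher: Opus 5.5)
Your proof is correct and is the standard argument behind \citet[Lemma 5.1]{nesterov2021superfast}, which the paper only cites and never proves: it expands $\nabla^2 f$ and $\nabla f$ around $\vx_0$ with integral remainders and bounds those remainders using the $L_3$-Lipschitz continuity of $\nabla^3 f$. You are also right that the linear term must read $\Vert \nabla^2 f(\vx_0) \Vert D$. As displayed, the bound fails for $f(\vx) = \frac{1}{2}\Vert \vx \Vert^2$, $\vx_0 = \vzero$, $\Vert \vx \Vert = D = 2$ and any $L_3 < 3/4$, since the left side is $2$ and the right side is $1 + \frac{4}{3} L_3$. The same slip recurs in the proof of Lemma~\ref{lem:bounded-norm}, where the term should be $2\Vert \nabla^2 f(\vx_0) \Vert R(\vx_0)$; this is harmless there because only a polynomial bound is needed.
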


Since the size of the constraint set $R_t$ defined in the BDGM subroutine depends on $\Vert \nabla f(\vz_t) \Vert$, which is uniformly bounded by the above lemma, we can conclude the boundedness of all the iterates of our final algorithm for $p=3$.

\begin{lem} \label{lem:bounded-norm}
Under Assumption \ref{asm:convex} and \ref{asm:lip-pth-deriv} for $p=3$, there exists a constant
\begin{align*}
 \bar R = {\rm poly}(L_3, R(\vx_0), \Vert \nabla f(\vx_0) \Vert, \Vert \nabla^2 f(\vx_0) \Vert, \Vert \nabla^3 f(\vx_0) \Vert )   
\end{align*}
such that all the iterates generated by the main procedure in Algorithm \ref{alg:AINE} and the BDGM subroutine (Algorithm~\ref{alg:BGDM-for-prox-3}) lie in the compact set $\gX = \{ \vx \in \sR^d: \Vert \vx - \vx^* \Vert \le \bar R \}$ when implementing the third-order MS oracle $\sO_3^\delta$ 
\end{lem}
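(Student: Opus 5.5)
We prove Lemma~\ref{lem:bounded-norm} by chaining three bounds: the main iterates $\vx_t$, then the auxiliary points $\vv_t,\vz_t,\vy_t$, then the BDGM iterates $\vy_{t,k}$. Throughout we track only polynomial dependence on the listed quantities.

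\textbf{Main iterates.} For $p=3$, Line~\ref{line:monotone-MS-option} of Algorithm~\ref{alg:AINE} selects $\vx_{t+1}$ as the best point among $\{\vx_0,\dots,\vx_t,\hat\vx_{t+1}\}$. Hence $f(\vx_t)\le f(\vx_0)$, so $\Vert \vx_t-\vx^*\Vert\le R(\vx_0)$ for every $t$. This part is immediate.

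\textbf{Auxiliary points.} By Lemma~\ref{lem:Car}, the potential $A_tE_t+D_t$ is nonincreasing. Since $A_0=0$, this gives $D_t\le D_0=\tfrac12\Vert\vx_0-\vx^*\Vert^2\le \tfrac12R(\vx_0)^2$, hence $\Vert\vv_t-\vx^*\Vert\le R(\vx_0)$. The point $\vz_t$ is a convex combination of $\vx_t$ and $\vv_t$, so $\Vert\vz_t-\vx^*\Vert\le R(\vx_0)$ as well. Consequently
\[
\Vert\vz_t-\vx_0\Vert\le \Vert\vz_t-\vx^*\Vert+\Vert\vx^*-\vx_0\Vert\le 2R(\vx_0).
\]
Applying Lemma~\ref{lem:bound-grad-hess-l3} with $D=2R(\vx_0)$ bounds $\Vert\nabla f(\vz_t)\Vert$ by a polynomial $G$ in $L_3$, $R(\vx_0)$, $\Vert\nabla f(\vx_0)\Vert$, $\Vert\nabla^2 f(\vx_0)\Vert$ and $\Vert\nabla^3 f(\vx_0)\Vert$. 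By \eqref{eq:dfn-Rt}, $R_t\le 2\bigl(12G/(L_3(\xi^2-1))\bigr)^{1/3}$, which is polynomial in the same quantities for fixed $\xi>1$.

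\textbf{BDGM iterates.} Every iterate $\vy_{t,k}$ of Algorithm~\ref{alg:BGDM-for-prox-3}, including the returned $\vy_t$, is produced as a minimizer over the ball $\{\Vert\vy-\vz_t\Vert\le R_t\}$. Therefore
\[
\Vert\vy_{t,k}-\vx^*\Vert\le R(\vx_0)+R_t .
\]
The convex combination $\hat\vx_{t+1}$ of $\vx_t$ and $\vy_t$ satisfies the same bound. It does not matter whether $\hat\vx_{t+1}$ is finally selected, since it is covered either way. Setting $\bar R = R(\vx_0)+\sup_t R_t$ then covers every iterate.

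\textbf{Main obstacle.} The delicate step is keeping the bound on $\vz_t$ uniform in $t$. This relies entirely on monotonicity of the potential in Lemma~\ref{lem:Car} (which only needs the MS condition, guaranteed by the termination rule of Algorithm~\ref{alg:BGDM-for-prox-3}) together with convexity of the combination defining $\vz_t$. I would check explicitly that $A_{t+1}'=A_t+a_{t+1}'$ makes the weights in $\vz_t$ sum to one, so that $\vz_t$ is indeed a convex combination. The remaining degree bookkeeping in Lemma~\ref{lem:bound-grad-hess-l3}, where the bound grows like $D^3$ and $R_t$ like $G^{1/3}$, is routine.
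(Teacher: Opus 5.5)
Your proposal is correct and follows the paper's proof essentially step for step. Monotonicity of the $p=3$ variant bounds $\vx_t$, the potential of Lemma~\ref{lem:Car} bounds $\vv_t$ and hence the convex combination $\vz_t$, and Lemma~\ref{lem:bound-grad-hess-l3} with $D=2R(\vx_0)$ bounds $\Vert\nabla f(\vz_t)\Vert$ and therefore $R_t$. You are slightly more explicit than the paper in two places: you bound the BDGM iterates directly through the ball constraint in the update, and you check that $\hat\vx_{t+1}$ is a convex combination and is therefore covered.
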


Let $\gX \subseteq \sR^d$ be the set defined in Lemma \ref{lem:bounded-norm}, we further define
\begin{align} \label{eq:l0-l1-on-gx}
   \bar L_0 = \sup_{t \in [T], \vy \in \gX} \Vert \nabla f_t(\vy) \Vert \quad {\rm and } \quad \bar L_1 = \sup_{t \in [T], \vy \in \gX} \Vert \nabla^2 f_t(\vy) \Vert.
\end{align}


Then, we show that the relative error of the MS condition (Condition 1 of Definition~\ref{dfn:delta-MS}) can be replaced by $\gO(\delta \Vert \vz_t - \vy_t^* \Vert)$. This relaxed condition is easier to analyze in a subsequent analysis.

\begin{lem} \label{lem:translated-MS-cond}
Let Assumption  \ref{asm:lip-pth-deriv}, \ref{asm:delta-Hess}, and \ref{asm:convex} hold for $p=3$.
For $f_t(\,\cdot\,)$ defined in Eq. (\ref{eq:prox-func-ft}) and $\lambda_t$in Eq.~(\ref{eq:dfn-lambda-t}), the MS condition (part 1 of Definition \ref{dfn:delta-MS}) can be relaxed to
\begin{align} \label{eq:relax-MS-cond}
    \Vert \nabla f_t(\vy_t) \Vert \le \frac{(\xi^2- 1)\delta}{3\xi} \Vert \vz_t - \vy_t^* \Vert.
\end{align}
\end{lem}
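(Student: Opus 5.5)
The plan is to observe that the residual in the MS condition is exactly the gradient of the proximal function $f_t$. Then I would use strong convexity of $f_t$ to compare $\Vert \vz_t - \vy_t^*\Vert$ with $\Vert \vy_t - \vz_t\Vert$. Throughout, write
\[
a \triangleq \frac{(\xi^2-1)\delta}{\xi}, \qquad b \triangleq \frac{(1+\xi)\xi L_3}{6}, \qquad r \triangleq \Vert \vy_t - \vz_t\Vert,
\]
so that $\lambda_t = a + b r^2$ by Eq. (\ref{eq:dfn-lambda-t}). Here $\xi>1$, which is implicit since the quadratic term in Eq. (\ref{eq:prox-func-ft}) must be nonnegative.

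First, I differentiate Eq. (\ref{eq:prox-func-ft}). Using $\nabla \tfrac14\Vert \vy-\vz_t\Vert^4 = \Vert \vy-\vz_t\Vert^2(\vy-\vz_t)$, we get
\[
\nabla f_t(\vy_t) = \nabla f(\vy_t) + a(\vy_t-\vz_t) + b r^2 (\vy_t - \vz_t) = \nabla f(\vy_t) + \lambda_t(\vy_t - \vz_t).
\]
Hence the MS condition (part 1 of Definition \ref{dfn:delta-MS}) is literally equivalent to $\Vert \nabla f_t(\vy_t)\Vert \le \lambda_t r/2$. Since $\lambda_t \ge a$, it suffices to prove $\Vert \nabla f_t(\vy_t)\Vert \le a r/2$.

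Second, I use that $f_t$ is $a$-strongly convex. This holds because $f$ is convex by Assumption \ref{asm:convex}, the quartic term is convex, and the quadratic term has modulus $a$. Strong convexity gives $a\Vert \vy_t - \vy_t^*\Vert \le \Vert \nabla f_t(\vy_t) - \nabla f_t(\vy_t^*)\Vert = \Vert \nabla f_t(\vy_t)\Vert$. Under the relaxed condition (\ref{eq:relax-MS-cond}), whose right-hand side is $\tfrac{a}{3}\Vert \vz_t - \vy_t^*\Vert$, this yields $\Vert \vy_t - \vy_t^*\Vert \le \tfrac13 \Vert \vz_t - \vy_t^*\Vert$. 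By the triangle inequality,
\[
\Vert \vz_t - \vy_t^*\Vert \le r + \Vert \vy_t - \vy_t^*\Vert \le r + \tfrac13\Vert \vz_t - \vy_t^*\Vert,
\]
so $\Vert \vz_t - \vy_t^*\Vert \le \tfrac32 r$.

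Finally, I plug this back into (\ref{eq:relax-MS-cond}) to get $\Vert \nabla f_t(\vy_t)\Vert \le \tfrac a3\cdot\tfrac32 r = \tfrac{a}{2} r \le \tfrac{\lambda_t}{2} r$, which is the MS condition.

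The only delicate point is the second step. The relaxed condition is stated in terms of the unknown distance $\Vert \vz_t - \vy_t^*\Vert$ rather than $r$, and the constant $1/3$ must be small enough for the triangle-inequality bootstrap to close. It is exactly what makes $\tfrac13\cdot\tfrac32 = \tfrac12$ work. Everything else is a direct computation.
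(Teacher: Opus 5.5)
Your proof is correct and is essentially the paper's argument: both combine the triangle inequality $\Vert \vz_t - \vy_t^*\Vert \le \Vert \vz_t - \vy_t\Vert + \Vert \vy_t - \vy_t^*\Vert$ with the $a$-strong convexity bound $a\Vert \vy_t - \vy_t^*\Vert \le \Vert \nabla f_t(\vy_t)\Vert$, and the factor $\tfrac13$ yields $\tfrac{a}{2}r \le \tfrac{\lambda_t}{2}r$. The only differences are that the paper rearranges directly for $\Vert \nabla f_t(\vy_t)\Vert$ whereas you first derive $\Vert \vz_t - \vy_t^*\Vert \le \tfrac32 r$ and substitute back, and that you state explicitly the identity $\nabla f_t(\vy_t) = \nabla f(\vy_t) + \lambda_t(\vy_t - \vz_t)$, which the paper uses without comment.
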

Note that we can assume $\Vert \nabla f(\vz_t) \Vert \ge  \epsilon/ R(\vx_0)$ without loss of generality, since otherwise we can conclude from the convexity that $\vz_t$ is already an $\epsilon$-solution. By truncating the iterates to all these non-stationary ones, we can show that the right-hand side of the relaxed condition (\ref{eq:relax-MS-cond}) is lower-bounded and finally arrive at the following key theorem.

\begin{thm} \label{thm:superfast-implementation}
Let Assumption  \ref{asm:lip-pth-deriv}, \ref{asm:delta-Hess}, and \ref{asm:convex} hold for $p=3$.
If $\Vert \nabla f(\vz_t) \Vert \ge \epsilon/ R(\vx_0)$, then the BDGM subroutine (Algorithm \ref{alg:BGDM-for-prox-3}) terminates in the number of iterations no more than
\begin{align} \label{eq:K-log-factor}
    K = {\rm poly} \log(\delta^{-1}, \epsilon^{-1}, L_3, R(\vx_0), \Vert \nabla f(\vx_0) \Vert, \Vert \nabla^2 f(\vx_0) \Vert, \Vert \nabla^3 f(\vx_0) \Vert).
\end{align}
\end{thm}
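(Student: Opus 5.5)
The plan is to combine the linear rate of Lemma~\ref{lem:BDGM} with the relaxed stopping rule of Lemma~\ref{lem:translated-MS-cond}.

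First, apply Lemma~\ref{lem:BDGM} to $\varphi = f_t$ on $\gY_t$ with scaling function $\rho_t$. By Lemma~\ref{lem:relative-smooth} (the regularizers of $f_t$ and $\rho_t$ differ only by terms handled there), we have $\mu_{\rho_t}(f_t) \ge 1-1/\xi$ and $L_{\rho_t}(f_t) \le 1+1/\xi$. Hence $\kappa = (\xi-1)/(\xi+1)$ is a numerical constant, and the step coefficient $2(1+1/\xi)$ in Algorithm~\ref{alg:BGDM-for-prox-3} matches Eq.~(\ref{eq:BDGM-update}). By Lemma~\ref{lem:prox-funct-ystar-bounded}, $\vy_t^* \in \gY_t$, so the constrained and unconstrained minimizers coincide. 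We therefore get
\[
\min_{k\le K} f_t(\vy_{t,k}) - f_t(\vy_t^*) \le c\,(1-\kappa/4)^K \beta_{\rho_t}(\vz_t,\vy_t^*).
\]
The Bregman term is polynomially bounded: since $\|\vy_t^*-\vz_t\|\le R_t$, it is at most $\tfrac12(\|\mH(\vz_t)\|+\delta)R_t^2 + \gO(L_3R_t^4)$. By Lemmas~\ref{lem:bound-grad-hess-l3} and \ref{lem:bounded-norm}, $R_t$ and $\|\mH(\vz_t)\|\le \|\nabla^2 f(\vz_t)\|+\delta$ are bounded by poly of the listed quantities.

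Second, convert the function gap into a gradient bound. The step I expect to be the main obstacle is that the iterate achieving the min is not necessarily the last one, and the gap controls gradients only through smoothness. I would use $\bar L_1$ from Eq.~(\ref{eq:l0-l1-on-gx}), which is polynomially bounded, and argue as follows. Either the iterate $\vy_{t,k}$ attaining the minimum lies in the interior, in which case $\|\nabla f_t(\vy)\|^2 \le 2\bar L_1 (f_t(\vy)-f_t(\vy_t^*))$ by standard smoothness on the convex compact set $\gX\supseteq \gY_t$. Or one uses the fact that a BDGM step from a point with small gap also produces a point of small gradient mapping, equal to the gradient as $\vy_t^*$ lies in the interior. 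To keep it simple, I would argue via strong convexity of $f_t$ ($\nabla^2 f_t \succeq (\xi^2-1)\delta/\xi\,\mI$): a small gap gives $\|\vy-\vy_t^*\|^2 \le 2\xi\,\mathrm{gap}/((\xi^2-1)\delta)$. Then $\|\nabla f_t(\vy)\|\le \bar L_1\|\vy-\vy_t^*\|$, since $\nabla f_t(\vy_t^*)=0$. The stopping check is applied at every iterate, so the min-iterate triggers termination. This step introduces the $\delta^{-1}$ dependence inside the log.

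Third, lower-bound the right-hand side of Eq.~(\ref{eq:relax-MS-cond}), i.e. show $\|\vz_t-\vy_t^*\|$ is not too small. From optimality, $\nabla f(\vy_t^*) = -\big(\tfrac{(\xi^2-1)\delta}{\xi} + \tfrac{(1+\xi)\xi L_3}{6}\|\vy_t^*-\vz_t\|^2\big)(\vy_t^*-\vz_t)$. Combined with $\|\nabla f(\vz_t)\|\le\|\nabla f(\vy_t^*)\| + \bar L_1' \|\vz_t-\vy_t^*\|$ (Hessian bound on $\gX$), this gives $\|\nabla f(\vz_t)\| \le C\,\|\vz_t-\vy_t^*\|$ with $C$ polynomial (using $\|\vz_t-\vy_t^*\|\le R_t$). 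Hence $\|\vz_t-\vy_t^*\| \ge \epsilon/(C R(\vx_0))$. It then suffices to make $\bar L_1\sqrt{2\xi\,\mathrm{gap}/((\xi^2-1)\delta)} \le \frac{(\xi^2-1)\delta}{3\xi}\cdot\frac{\epsilon}{CR(\vx_0)}$. Solving $c(1-\kappa/4)^K\cdot\mathrm{poly} \le \mathrm{poly}(\delta,\epsilon,\ldots)^{-1}$ gives $K = \gO(\log(\text{poly}))$, which is polylogarithmic in $\delta^{-1},\epsilon^{-1}$ and the listed constants, as in Eq.~(\ref{eq:K-log-factor}).
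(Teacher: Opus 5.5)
Your proposal is correct and follows essentially the same route as the paper's proof. Both arguments combine the BDGM linear rate, a polynomial bound on $\beta_{\rho_t}(\vz_t,\vy_t^*)$, a lower bound on $\Vert \vz_t-\vy_t^*\Vert$ (namely $\Vert\nabla f(\vz_t)\Vert$ divided by a polynomial smoothness constant on $\gX$), a conversion of the function gap into a gradient bound via $\Vert\nabla f_t(\vy)\Vert\le \bar L_1\Vert \vy-\vy_t^*\Vert$, and the relaxed MS condition of Lemma~\ref{lem:translated-MS-cond}. The differences are cosmetic: you go from gap to distance using the $(\xi^2-1)\delta/\xi$-strong convexity of $f_t$ rather than the uniform convexity of the quartic term, and you handle the best-iterate issue explicitly where the paper glosses over it; neither changes the polylogarithmic conclusion.
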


\begin{cor}[AINE for $p=3$] \label{cor:AINE-p3} 
Let Assumption  \ref{asm:lip-pth-deriv}, \ref{asm:delta-Hess}, and \ref{asm:convex} hold for $p=3$. By implementing the third-order $\delta$-MS oracle via the BDGM subroutine as Algorithm \ref{alg:BGDM-for-prox-3}, Algorithm \ref{alg:AINE} can find an $\epsilon$-solution in the ISO complexity of $\tilde \gO\left( (\delta/ \epsilon)^{1/2} + (L_3/ \epsilon)^{1/5} \right)$, and each oracle call requires running the BDGM subroutine with $K$ iterations, where $K$ is defined in Eq.~(\ref{eq:K-log-factor}).
\end{cor}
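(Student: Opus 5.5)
The plan is to combine Theorem~\ref{thm:AINE} with $p=3$ and the oracle implementation of Theorem~\ref{thm:superfast-implementation}. It has two parts: counting outer iterations, and bounding the per-call cost of the BDGM subroutine.

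\emph{Outer iteration count.} First I will check that Algorithm~\ref{alg:BGDM-for-prox-3} really returns a valid third-order $\delta$-MS oracle. It stops only when the MS condition (Condition~1 of Definition~\ref{dfn:delta-MS}) holds. Condition~2 holds by the choice of $\lambda_t$ in Eq.~(\ref{eq:dfn-lambda-t}) with a fixed numerical $\xi>1$, e.g.\ $\xi=2$: then $\lambda_t \ge c(\delta + L_3\Vert \vy_t-\vz_t\Vert^2/6)$ with $c=\min\{(\xi^2-1)/\xi,(1+\xi)\xi\}$. Theorem~\ref{thm:AINE} with $p=3$ then gives
\[
E_T = \gO\!\left(\frac{\delta D_0}{T^2} + \frac{L_3 D_0^{2}}{T^{5}}\right).
\]
Making each term at most $\epsilon/2$ requires $T = \gO\big((\delta D_0/\epsilon)^{1/2} + (L_3 D_0^2/\epsilon)^{1/5}\big)$, which is $\gO((\delta/\epsilon)^{1/2}+(L_3/\epsilon)^{1/5})$ once $D_0$ is treated as a constant. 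Each outer iteration calls the oracle once, and each oracle call uses the single inexact Hessian $\mH(\vz_t)$. That Hessian enters both $\rho_t$ in Eq.~(\ref{eq:scaling-func-rhot}) and every BDGM step, so one ISO query per outer iteration suffices. The gradient queries inside BDGM are not counted as ISO calls, but they cost only a polylogarithmic factor.

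\emph{Termination of the subroutine.} Next I need every BDGM call to terminate within $K$ iterations. Theorem~\ref{thm:superfast-implementation} gives this under the hypothesis $\Vert\nabla f(\vz_t)\Vert\ge \epsilon/R(\vx_0)$, so I will handle that case split explicitly. If some query point has $\Vert\nabla f(\vz_t)\Vert<\epsilon/R(\vx_0)$, then convexity gives $f(\vz_t)-f^*\le \Vert\nabla f(\vz_t)\Vert\,\Vert\vz_t-\vx^*\Vert\le\epsilon$. This uses $\Vert \vz_t-\vx^*\Vert\le R(\vx_0)$, which I expect to take from Lemma~\ref{lem:bounded-norm} or the monotone variant, possibly after replacing $R(\vx_0)$ by $\bar R$, at the cost of only a log factor. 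In that case the algorithm can stop and output $\vz_t$. Concretely, I would check the gradient norm before each oracle call. Otherwise the hypothesis holds, and the call costs at most $K$ BDGM iterations, with $K$ polylogarithmic as in Eq.~(\ref{eq:K-log-factor}).

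\emph{Per-iteration cost and the main obstacle.} Finally, I will argue the $\tilde\gO(d^\omega + Cd)$ cost per BDGM step. Diagonalizing $\mH(\vz_t)$ once, in matrix-multiplication time, reduces each Bregman step on the ball to a one-dimensional convex problem in the dual multiplier, with $\gO(d)$ work per evaluation. The main obstacle is making the $\tilde\gO$ claim honest. The Theorem~\ref{thm:AINE} bound involves $D_0$, and the log factor $K$ depends on $\bar R$ and the initial derivatives, so I must verify that all of these are problem constants independent of $\epsilon$ and $\delta$ up to logs. I also need the early-stopping test to be consistent with the monotone variant (Line~\ref{line:monotone-MS-option}), so that the final returned point is still an $\epsilon$-solution.
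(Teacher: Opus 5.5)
Your proposal is correct and follows the route the paper takes implicitly. Theorem~\ref{thm:AINE} with $p=3$ gives $T=\gO((\delta D_0/\epsilon)^{1/2}+(L_3D_0^2/\epsilon)^{1/5})$ outer iterations, each consuming the single inexact Hessian $\mH(\vz_t)$, and Theorem~\ref{thm:superfast-implementation} caps each BDGM call at $K$ iterations after the same convexity reduction to $\Vert\nabla f(\vz_t)\Vert\ge\epsilon/R(\vx_0)$ (using $\Vert\vz_t-\vx^*\Vert\le R(\vx_0)$). One detail worth making explicit is that the proof of Theorem~\ref{thm:AINE} also uses the upper bound $\lambda_t=\gO(\delta+L_3\Vert\vy_t-\vz_t\Vert^2)$, not only Condition~2; the choice in Eq.~(\ref{eq:dfn-lambda-t}) with a fixed $\xi>1$ supplies it.
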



\section{Lower Complexity Bound} \label{sec:lb-convex}

We now show that the upper bounds established by Theorem~\ref{thm:AINE} are indeed optimal up to constants by proving matching lower bounds for the following algorithm class.

\begin{dfn} \label{dfn:alg-class}
A deterministic inexact second-order algorithm \texttt{A} consists of a sequence of measurable mappings $\{ \texttt{A}_t \}_{t \in \sN}$ such that $\texttt{A}_t$ takes in the first $t-1$ oracle responses  to produce the $t$th query,  \textit{i.e.}, \texttt{A} generates a sequence $\{\vx_t\}_{t \in \sN}$ such that
\begin{align*}
    \vx_t = \texttt{A}_t ( f(\vx_0), \nabla f(\vx_0), \mH(\vx_0), \cdots, f(\vx_{t-1}), \nabla f(\vx_{t-1}), \mH(\vx_{t-1}) ),
\end{align*}
where $\mH(\vx)$ is the inexact second-order oracle at the query point $\vx$
and satisfies Assumption \ref{asm:delta-Hess}.
\end{dfn}

For the above algorithm class,
we establish lower bounds by using the results in \citet{arjevani2019oracle}, where the $\Omega(L_p/T^{(3p+1)/2})$ lower bound comes from \citet[Theorem 3]{arjevani2019oracle}, and the $\Omega(\delta/T^2)$ lower bound comes from the same theorem with $p = 1$ and $L_1=\delta$.
\begin{prop} \label{prop:lb-convex}
For any $p \in \sN_+$, there exists a function $f:\sR^d \rightarrow \sR$ satisfying Assumption \ref{asm:lip-pth-deriv} and \ref{asm:convex}, such that any algorithm in the class of Definition \ref{dfn:alg-class} has
\begin{align*}
f(\vx_T) - f(\vx^*) = \Omega \left( {\delta D_0}/{T^2} + {L_p D_0^{(p+1)/2}}/{T^{(3p+1)/2}} \right).
\end{align*}
\end{prop}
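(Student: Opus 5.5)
The lower bound is a maximum of two terms, and $\max\{a,b\} \ge (a+b)/2$. So it suffices to exhibit, for each term separately, a hard instance on which every algorithm in the class of Definition~\ref{dfn:alg-class} suffers that term. The final statement then follows by choosing whichever instance yields the larger bound for the given $(\delta, L_p, D_0, T)$.

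\textbf{The $L_p$ term.} Take the hard convex function of \citet[Theorem 3]{arjevani2019oracle}. It is convex, has $L_p$-Lipschitz $p$th derivative, satisfies $\frac12\Vert \vx_0-\vx^*\Vert^2 = D_0$, and is built from a chain-like zero-respecting structure in dimension $d = \Omega(T)$. On this instance we let the inexact oracle return the exact Hessian, $\mH(\vx) = \nabla^2 f(\vx)$, which trivially satisfies Assumption~\ref{asm:delta-Hess}. Our algorithm class then receives strictly less information than a full $p$th-order oracle. Moreover, that theorem covers deterministic algorithms with arbitrary measurable query maps (via the random-rotation / robust zero-chain argument), not only span-restricted ones. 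Hence $f(\vx_T) - f(\vx^*) = \Omega(L_p D_0^{(p+1)/2}/T^{(3p+1)/2})$.

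\textbf{The $\delta$ term.} Apply the same theorem with $p=1$ and $L_1 = \delta$. This gives a convex quadratic-type (Nesterov-chain) function $f$ with $\delta$-Lipschitz gradient on which any first-order algorithm has error $\Omega(\delta D_0/T^2)$. The key point is to make the Hessian oracle useless. Since $0 \preceq \nabla^2 f \preceq \delta \mI_d$, the constant response $\mH(\vx) = \frac{\delta}{2}\mI_d$ satisfies $\Vert \mH(\vx) - \nabla^2 f(\vx)\Vert \le \delta/2 \le \delta$. (One could also use $\mH \equiv 0$, which gives exactly error $\le \delta$.) This response is symmetric and independent of $f$, so it conveys no information. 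Thus any algorithm in Definition~\ref{dfn:alg-class} is simulated by a first-order algorithm making the same $T$ queries, and it inherits the $\Omega(\delta D_0/T^2)$ bound. Also note $f$ has $p$th derivative zero (for a quadratic, $p\ge 3$) or is otherwise smooth enough. For $p\ge 2$ and a quadratic instance, $\nabla^p f$ is constant, so Assumption~\ref{asm:lip-pth-deriv} holds with any $L_p>0$. For $p=1$ we need $\delta \le L_1$, or else we simply read the bound as stated.

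\textbf{Main obstacle.} The delicate step is ensuring that the first-order lower bound of \citet{arjevani2019oracle} applies to the fully general deterministic class, and not just to span-based methods. The same concern arises in checking that the quadratic instance can be chosen convex with the correct $D_0$ scaling. I would handle both by invoking their theorem directly in its general-algorithm form (the dimension grows with $T$). The only bookkeeping left is to verify that the constant-Hessian oracle is a measurable, valid $\delta$-estimator, so that it does not enlarge the information available.
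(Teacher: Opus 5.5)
Your proposal follows the paper's argument. The $L_p$ term is read off from \citet[Theorem~3]{arjevani2019oracle}. The $\delta$ term comes from the same theorem with $p=1$ and $L_1=\delta$: the hard instance is a quadratic with $0\preceq\nabla^2 f\preceq\delta\mI_d$, so an $f$-independent response is a valid $\delta$-estimator. That response is $\mH\equiv\mO_d$ in the paper and $\frac{\delta}{2}\mI_d$ in yours; with it the algorithm collapses to a first-order one. Since $\nabla^p f$ is constant for $p\ge 2$, the instance also satisfies Assumption~\ref{asm:lip-pth-deriv}. For $p\ge 2$, which covers the cases $p\in\{2,3\}$ the paper actually needs, this is complete.

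The step that fails is $p=1$, which the statement nominally includes.

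\emph{The $L_1$ term.} At $p=1$, giving the algorithm the exact Hessian is not ``strictly less information than a full $p$th-order oracle''. It is strictly more than the first-order oracle of the cited theorem. On that theorem's quadratic hard instance, one Newton step $\vx_1=\vx_0-(\nabla^2 f)^{-1}\nabla f(\vx_0)$ already returns $\vx^*$. So the $L_1$ term is not obtained this way; it would need a genuinely second-order lower bound on functions whose Hessian is not Lipschitz.

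\emph{The $\delta$ term.} Your hedge ``for $p=1$ we need $\delta\le L_1$, or else we simply read the bound as stated'' does not work either. For $p=1$ and $\delta\gg L_1$, the $\delta$-instance violates Assumption~\ref{asm:lip-pth-deriv}. The claim is in fact false there, because AGD, which ignores $\mH$, guarantees $f(\vx_T)-f(\vx^*)=\gO(L_1 D_0/T^2)$.

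The paper's proof has the same blind spot. Its quadratic remark is stated only for $p\ge 2$, and ``follows immediately'' glosses over the Hessian issue at $p=1$. The right repair is to restrict the proposition to $p\ge 2$.

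A smaller point, also shared with the paper, concerns the quantifier order. Against deterministic algorithms, the hard function must be allowed to depend on the algorithm, since any fixed $f$ is defeated by the algorithm that simply outputs its minimizer. So the quantifiers should read ``for every algorithm there exists $f$''. That is the form in which resisting-oracle lower bounds like the cited one actually hold, and it is what your ``hard instance on which every algorithm suffers'' should say.
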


Note that \citet[Theorem 4.2]{agafonovadvancing} also provides an $\Omega(\delta/T^2+L_2/T^{7/2})$ lower bound for inexact second-order linear span algorithms ($p=2$) using similar reductions, but Proposition~\ref{prop:lb-convex}
generalizes their result to any $p \in \sN_+$ and to all deterministic algorithms. By taking $p=2$ or $p=3$, we can conclude that AINE is optimal in oracle complexity for both settings.

\section{Solving the Auxiliary Subproblems in Nearly Matmul Time}

In this section, we demonstrate that all the auxiliary subproblems in AINE for $p=2$ or $p=3$ can be solved in nearly matrix multiplication time \citep{alman2025more,duan2023faster}. Recall that for $p=2$ we need to solve the inexact CRN step in Eq. (\ref{eq:inexact-CRN-step}) and for $p=3$ we need to conduct the BDGM update in Eq. (\ref{eq:BDGM-update}). We can observe that they can be unified in the following form: 
\begin{align} \label{eq:aux-p2}
\min_{\Vert \vh \Vert \le R} q(\vh) \triangleq \langle \vb , \vh \rangle + \langle \mA \vh, \vh \rangle/2 + c \Vert \vh \Vert^{p+1} / (p+1). 
\end{align}
This subproblem is closely related to the CRN subproblem \citep{nesterov2006cubic} and can be solved in a similar way. By leveraging the eigenvalue decomposition for matrix $\mA$ and then transforming Problem (\ref{eq:aux-p2}) to a one-dimensional root-finding problem, we can show that the subproblem is almost as cheap as a Newton step.

\begin{prop} \label{prop:aux-solver-p}
Let $C$ be the cost of solving the one-dimensional sub-problem and $\omega \approx 2.37$ be the matrix multiplication constant.
For any $p \in \sN_+$, subproblem (\ref{eq:aux-p2}) can be solved in running time of $\gO( d^\omega + Cd)$.
\end{prop}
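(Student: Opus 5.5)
Our plan is to reduce subproblem (\ref{eq:aux-p2}) to a one-dimensional problem in the radius $r = \Vert \vh \Vert$ via an eigendecomposition of $\mA$, and to solve that scalar problem at cost $C$.

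\textbf{Step 1: eigendecomposition.} First compute a symmetric eigendecomposition $\mA = \mU \Lambda \mU^\top$, with $\Lambda = \mathrm{diag}(\lambda_1,\dots,\lambda_d)$. This costs $\gO(d^\omega)$ up to the usual numerical-precision considerations; alternatively, a tridiagonalization would give the same bound. Then form $\tilde\vb = \mU^\top \vb$ in $\gO(d^2)$ time. In the rotated variable $\tilde\vh = \mU^\top\vh$ the problem becomes separable except for the norm terms:
\begin{align*}
\min_{\Vert \tilde\vh\Vert\le R}\ \sum_i \left(\tilde b_i \tilde h_i + \tfrac12\lambda_i \tilde h_i^2\right) + \tfrac{c}{p+1}\Vert\tilde\vh\Vert^{p+1}.
\end{align*}
In all our uses $\mA$ is such that $q$ is convex on the ball. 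For the CRN step $\mA = \mH+2\delta\mI \succeq \nabla^2 f \succeq 0$. For BDGM the objective is a Bregman-regularized linearization, whose Hessian $\nabla^2\rho_t$ dominates. So we can treat the problem as a convex program and use its KKT conditions.

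\textbf{Step 2: scalar reduction.} Introduce a multiplier $\mu\ge 0$ for the ball constraint. The KKT conditions read
\begin{align*}
(\Lambda + (c r^{p-1} + 2\mu)\mI)\tilde\vh = -\tilde\vb, \qquad r=\Vert\tilde\vh\Vert.
\end{align*}
Write $\tau = c r^{p-1}+2\mu$. Then $\tilde h_i = -\tilde b_i/(\lambda_i+\tau)$, so
\begin{align*}
r(\tau)^2=\sum_i \tilde b_i^2/(\lambda_i+\tau)^2 ,
\end{align*}
which is decreasing in $\tau > -\lambda_{\min}$. The solution is obtained in two cases.
\begin{itemize}
\item If the unconstrained equation $\tau = c\, r(\tau)^{p-1}$ has a root with $r(\tau)\le R$, that root gives the solution, with $\mu = 0$.
\item Otherwise $r = R$, and we solve $r(\tau) = R$ for $\tau$, setting $\mu = (\tau - cR^{p-1})/2 \ge 0$.
\end{itemize}
Each case is a monotone one-dimensional root-finding problem. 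Equivalently, it is the minimization of a one-dimensional convex dual function
\begin{align*}
\tau \mapsto -\tfrac12\sum_i \tilde b_i^2/(\lambda_i+\tau) - \cdots,
\end{align*}
which is exactly the kind of scalar convex problem the paper's convention charges at cost $C$. Each evaluation of $r(\tau)$ costs $\gO(d)$, so the total for this step is $\gO(Cd)$ when $C$ counts the number of function evaluations.

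\textbf{Step 3: recovery.} Recover $\tilde\vh$ coordinate-wise and set $\vh = \mU\tilde\vh$, both in $\gO(d^2)$. Summing the three steps gives $\gO(d^\omega + Cd)$.

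\textbf{Main obstacle.} The main difficulty is the degenerate ("hard") case of the trust-region/CRN subproblem. There $\tilde b_i = 0$ for the eigenvectors of $\lambda_{\min}$, and the optimal $\tau$ equals $-\lambda_{\min}$, so $r(\tau)$ does not reach the required value. We will handle it as in \citet{nesterov2006cubic}: set $\tau=-\lambda_{\min}$, compute the partial solution on the remaining coordinates, and add a component along an eigenvector of $\lambda_{\min}$ to fill the norm to the required radius. With our convexity ($\lambda_{\min}\ge0$) this case only arises when $\tilde\vb$ is degenerate, and it adds no extra cost. We will also check that monotonicity of $\tau - c\,r(\tau)^{p-1}$ guarantees a unique root, so the scalar problem is well-posed.
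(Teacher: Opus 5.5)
Your proposal is correct and follows essentially the same route as the paper's proof. Both use an $\mathcal{O}(d^\omega)$ eigendecomposition of $\mA$, a KKT-based split into the interior case $\tau = c\,r(\tau)^{p-1}$ and the boundary case $r(\tau)=R$, and a monotone one-dimensional root-finding in the multiplier via $\sum_i \tilde b_i^2/(\lambda_i+\tau)^2$ at $\mathcal{O}(d)$ per evaluation. Your extra remarks on convexity of $q$ in the CRN and BDGM instances and on the degenerate hard case are points the paper leaves implicit, but they do not change the argument.
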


The above result not only applies to our method, but a similar idea can be used to characterize the running time of the auxiliary problem in \citet{agafonov2024inexact}, as remarked below.

\begin{rmk} \label{rmk:inexact-AITM-aux}
The auxiliary problem when $p=3$ in \citet{agafonov2024inexact} takes the form of $\tilde q(\vh) \triangleq q(\vh) + \lambda \Vert \vh \Vert^3$, which contains a third-order regularization term. However, one can follow the same idea as Proposition \ref{prop:aux-solver-p} to obtain the same running time.
\end{rmk}




\section{Conclusion and Future Works}
In this paper, we propose a novel $\delta$-inexact second-order method AINE for convex optimization. We show that the proposed method achieve an optimal ISO complexity of $\gO((\delta/\epsilon)^{1/2} + (L_2/\epsilon)^{2/7})$ for functions with $L_2$-Lipschitz continuous Hessians and that of $\gO((\delta/\epsilon)^{1/2} + (L_3/\epsilon)^{1/5})$ with $L_3$-Lipschitz third-order derivatives.  Our method is tight for both setups, and we hope that it can be a starting point to study many practical inexact/approximate second-order methods in the future. We discuss these potential future directions in Appendix \ref{apx:future-work}.

\bibliography{references}
\bibliographystyle{plainnat}

\newpage
\appendix

\input{apx}

\end{document}

%% file: math.tex
\usepackage{amsmath,amsfonts,bm,amsthm,amssymb}

\def\eqref#1{equation~\ref{#1}}

\def\1{\bm{1}}

\def\vzero{{\bm{0}}}

\def\va{{\bm{a}}}
\def\vb{{\bm{b}}}
\def\vc{{\bm{c}}}

\def\vh{{\bm{h}}}

\def\vs{{\bm{s}}}

\def\vv{{\bm{v}}}

\def\vx{{\bm{x}}}
\def\vy{{\bm{y}}}
\def\vz{{\bm{z}}}

\def\mA{{\bm{A}}}

\def\mH{{\bm{H}}}
\def\mI{{\bm{I}}}

\def\mO{{\bm{O}}}

\DeclareMathAlphabet{\mathsfit}{\encodingdefault}{\sfdefault}{m}{sl}
\SetMathAlphabet{\mathsfit}{bold}{\encodingdefault}{\sfdefault}{bx}{n}

\def\gO{{\mathcal{O}}}

\def\gX{{\mathcal{X}}}
\def\gY{{\mathcal{Y}}}

\def\sB{{\mathbb{B}}}

\def\sN{{\mathbb{N}}}
\def\sO{{\mathbb{O}}}

\def\sR{{\mathbb{R}}}

\newtheorem{thm}{Theorem}[section]
\newtheorem{dfn}{Definition}[section]

\newtheorem{lem}{Lemma}[section]
\newtheorem{asm}{Assumption}[section]
\newtheorem{rmk}{Remark}[section]
\newtheorem{cor}{Corollary}[section]

\newtheorem{prop}{Proposition}[section]

%% file: apx.tex
\section{Additional Related Works} \label{sec:related}

\paragraph{Efficient Second-Order Methods.} Second-order methods are known for fast convergence rates at a cost of higher computation overhead compared to first-order methods. It is a highly active research area for developing efficient second-order methods. Many studies focuses on different aspects from this work, including but no limited to: quasi-Newton methods \citep{broyden1970convergence,conn1991convergence,rodomanov2022rates,jin2023non,jiang2023accelerated,jiang2023online,jiang2025improved}, stochastic Newton methods \citep{zhou2019stochastic,arjevani2020second,antonakopoulos2022extra}, subspace Newton methods \citep{hanzely2020stochastic,jiang2024krylov}, lazy Hessian updates \citep{doikov2023second,adil2025balancing}, and efficient implementations in neural networks \citep{grosse2016kronecker,gupta2018shampoo,liu2024sophia,vyas2025soap,abreu2025potential}. These works are complementary to ours with orthogonal techniques that can be jointly applied.

\paragraph{Accelerated Inexact Second-Order Methods.} Although extra-gradient acceleration \citep{monteiro2012iteration,monteiro2013accelerated,carmon2022optimal,kovalev2022first} is widely recognized as a standard technique to obtain optimal oracle complexities in exact second-order or higher-order optimization, it is challenging to generalize these  optimal exact methods to the inexact setting. We briefly introduce the results in closely related works to demonstrate the technical challenges to balance the convergence in $\delta$-inexact term and the exact one: \citet{antonakopoulos2022extra} proposed a stochastic and adaptive second-order method Extra-Newton by combining Newton updates in extra-gradient methods, but their $\gO((\delta/\epsilon)^{2/3} + (L_2/\epsilon)^{1/3} )$ ISO complexity has a high dependency on the Hessian inexactness $\delta$ even in the deterministic case.
\citet{jiang2023accelerated} studied a quasi-Newton generalization of A-NPE, but directly applying their analysis to our setting can only lead to a suboptimal ISO complexity of $\gO( (\delta/\epsilon)^{1/2} + (L_2/ \epsilon)^{2/5})$. Compared with them, this work aims at establishing much lower and optimal complexity bounds on both convergent terms by the new analysis in Section \ref{subsec:algo-AINE}.

\section{Future Works} \label{apx:future-work}

There are several future directions to extend our work. First, this work only considers a uniform inexactness $\delta$ in Hessian estimator. It is interesting to consider iteration-dependent inexactness $\delta_t$ and making our framework covers broader Hessian estimators, including quasi-Newton estimators \citep{broyden1970convergence,conn1991convergence}, lazy Hessian estimators \citep{doikov2023second,chayti2023unified,adil2025balancing}, and variance-reduced estimators \citep{zhou2019stochastic,zhou2020bstochastic,zhou2022dimension,wang2019stochastic,emmenegger2022oracle}. Second, it is also important to study stochastic \citep{antonakopoulos2022extra} and nonconvex problems \citep{nesterov2006cubic,arjevani2020second,tripuraneni2018stochastic}. Third, our methods may be extended to solve variational inequalities \citep{agafonov2024exploring} using $\delta$-inexact Jacobians or minimax optimization. 
\section{Proof of Theorem \ref{thm:AINE}}


\begin{proof}
By inequality (\ref{eq:decrease-Car}) of Lemma \ref{lem:Car}, we have $E_t \le D_0 / A_t$ for all $t \in [T]$. Therefore, proving the convergence rate suffices to estimate the growth of $A_t$.

Following the idea in \citet{adil2024convex}, we denote $\{t_j\}$ as the set of the timestamps $t_j$, where the condition $t_j = 1$ or $A_{t_j} > 2 A_{t_{j-1}}$ in the Guess subroutine of Algorithm \ref{alg:AINE} happens. Then, 
\begin{align*}
    1 = A_{t_1} < 2 A_{t_2} < \cdots < 2^j A_{t_{j}}.
\end{align*}
This divides all iterations into multiple epochs, where the $j$th epoch is defined as $e_j :=\{ t \in \sN_+ : t_{j} \le t < t_{j+1}\}$. Next, we give an upper bound on each epoch length $\vert e_j \vert$. We further divide each epoch by two distinct sets, depending on whether $ \lambda_t \le \lambda_t' $ happens in Algorithm \ref{alg:AINE}. Formally, we let
\begin{align*}
    S_j^{\downarrow} := \{ t \in e_j: \lambda_t < \lambda_t' \} \quad {\rm and} \quad S_j^{\uparrow} := e_j - S_j^{\rm \downarrow}.
\end{align*}
Then we separately consider the iterates in $S_{j}^{\rm \downarrow}$ and in $S_j^{\rm \uparrow}$. For $t \in S_{j}^{\downarrow}$, we have $\lambda_t < \lambda_t'$ and the momentum damping mechanism in Algorithm \ref{alg:AINE} is not triggered, which means $\beta_t = 1$, $a_{t+1} = a'_{t+1}$, $A_{t+1} = A'_{t+1}$. In this case, we have
\begin{align*} 
\begin{split}
  &\sqrt{A_{t+1}} - \sqrt{A_t} = \frac{A_{t+1} - A_t}{\sqrt{A_{t+1}}+ \sqrt{A_t}} = \frac{a_{t+1}}{\sqrt{A_{t+1}}+ \sqrt{A_t}} =  \frac{\sqrt{A_{t+1} / \lambda'_{t}}}{\sqrt{A_{t+1}}+ \sqrt{A_t}} \ge \frac{1}{2 \sqrt{\lambda'_{t}}},
\end{split}
\end{align*}
where the last equality uses the fact that $a_{t+1} = a'_{t+1}$ satisfies the equation $ \lambda_t' (a'_{t+1})^2 = A_t + a'_{t+1} = A'_{t+1}$. Telescoping this inequality for $t \in S_j^{\downarrow}- \{t_{j+1}-1\}$ and noting that $\lambda'_t = \lambda'_{t_j}$ is fixed in the epoch, we have
\begin{align} \label{eq:ub-down}
    (\sqrt{2}-1) \sqrt{A_{t_j}}  \ge \sqrt{A_{t_{j+1}-1}} - \sqrt{A_{t_j}} \ge \frac{\vert S_j^{\downarrow} \vert-1}{2 \sqrt{\lambda'_{t_j}}}.
\end{align}
For $t \in S_j^{\uparrow}$, we have $\lambda_t \ge \lambda'_t$ and the momentum damping mechanism in Algorithm \ref{alg:AINE} is triggered, then $\beta_t = \lambda_t' / \lambda_t$, $a_{t+1} = \beta_t a'_{t+1}$, $A_{t+1} = A_t + \beta_t a'_{t+1}$. In this case, we further divide $ S_j^{\uparrow}$ by two smaller distinct sets, depending on whether $\delta > L_p \Vert \vy_t - \vz_t \Vert^{p-1} /p!$ happens. Formally, we let 
\begin{align*}
    &S_{j,\delta}^{\uparrow} := \{ t \in S_j^{\uparrow} : \delta> L_p \Vert \vy_t - \vz_t \Vert^{p-1} / p! \} \quad {\rm and} \quad S_{j,p}^{\uparrow} := S_j^{\uparrow}- S_{j,1}^{\uparrow}.
\end{align*}
For $t \in S_{j,\delta}^{\uparrow}$, we have
\begin{align*}
\frac{1}{\lambda_t} = 
\frac{1}{2(\delta+ L_p \Vert\vy_t - \vz_t \Vert^{p-1}/p!)} > \frac{1}{4\delta}.
\end{align*}
Combing this inequality with $a_{t+1} = a'_{t+1} \lambda_t' / \lambda_t $ and
$a'_{t+1} \ge \sqrt{A_t/\lambda'_t}$, we have
\begin{align*}
    a_{t+1} = \frac{\lambda_t' a'_{t+1}}{\lambda_t} \ge \frac{\sqrt{\lambda'_t A_t}}{\lambda_t} >\frac{\sqrt{\lambda'_t A_{t}}}{4 \delta}.
\end{align*}
Telescoping this inequality for $t \in S_{j,\delta}^{\uparrow}- \{t_{j+1}-1\}$ and noting that $\lambda'_t = \lambda'_{t_j}$ is fixed and $A_t \ge A_{t_j}$ in the $j$th epoch gives
\begin{align} \label{eq:ub-up-1}
A_{t_j} \ge A_{t_{j+1}-1} - A_{t_j}> \frac{\sqrt{\lambda'_{t_j} A_{t_j}} (\vert S_{j,\delta}^{\uparrow} \vert -1 )}{4\delta}.
\end{align}
For $t \in S_{j,p}^{\uparrow}$, we have $\lambda_t \ge \lambda'_t$ and
\begin{align*}
   \Vert \vy_t - \vz_t \Vert =& \left( \frac{p!}{2L_p} \right)^{\frac{1}{p-1}} \left( \frac{2L_p \Vert \vy_t - \vz_t \Vert^{p-1}}{p!} \right)^{\frac{1}{p-1}} \\
   \ge&  \left( \frac{p!}{2L_p} \right)^{\frac{1}{p-1}} \left( \delta+ \frac{L_p \Vert \vy_t - \vz_t \Vert^{p-1}}{p!} \right)^{\frac{1}{p-1}} \\
   =&  \left( \frac{p!}{4L_p} \right)^{\frac{1}{p-1}} \lambda_t^{\frac{1}{p-1}} \ge  \left( \frac{p!}{4L_p} \right)^{\frac{1}{p-1}} \left(\lambda'_t\right)^{\frac{1}{p-1}}.
\end{align*}
Then telescoping inequality~(\ref{eq:decrease-Car}) for $t \in S_{j,p}^{\uparrow} - \{t_{j+1} -1\}$ with $\sigma=1/2$ gives
\begin{align}  \label{eq:ub-up-2}
\begin{split}
    D_0 \ge& \frac{3}{8} \sum_{t=t_j}^{t_{j+1}-1} A'_{t+1} \lambda'_t \Vert \vy_t - \vz_t \Vert^2 
    \\
    \ge& \frac{3}{8} \left( \frac{p!}{4L_p} \right)^{\frac{2}{p-1}}  \sum_{t=t_j}^{t_{j+1}-1} A'_{t+1} \left(\lambda'_t \right)^{\frac{2p}{p-1}} \\
    \ge& 
    \frac{3}{8} \left( \frac{p!}{4L_p} \right)^{\frac{2}{p-1}} (\vert S_{j,2}^\uparrow \vert -1) A_{t_j}
    \left(\lambda'_{t_j} \right)^{\frac{2p}{p-1}}.
\end{split}
\end{align}
Note that $\vert e_j \vert = \vert S_j^\downarrow \vert + \vert S_{j,\delta}^\uparrow \vert + \vert S_{j,p}^\uparrow \vert$, we can combine inequalities (\ref{eq:ub-down}), (\ref{eq:ub-up-1}), and (\ref{eq:ub-up-2}) to obtain 
\begin{align} \label{eq:ub-ei-total}
    \vert e_j \vert = \gO\left(\sqrt{A_{t_j} \lambda'_{t_j}} + \delta \sqrt{ \frac{A_{t_j}}{\lambda_{t_j}'}} + \frac{D_0 L_p^{2/(p-1)}}{A_{t_j} (\lambda'_{t_j})^{2p/(p-1)}}  \right).
\end{align}
Since the value of $A_{t_j}$ is already known at the beginning of the epoch, we can pick $\lambda'_{t_j}$ to minimize the epoch length~$\vert e_j \vert$ in Eq. (\ref{eq:ub-ei-total}).
Formally, we choose
\begin{align*}
    \lambda'_{t_j} = \Theta \left(  \delta  + \left( {D_0 L_p^{\frac{2}{p-1}}}/{A_{t_j}^{\frac{3}{2}}}  \right)^{\frac{2(p-1)}{3p+1}}\right),
\end{align*}
which corresponds to the setting of Eq. (\ref{eq:para-lambda-s}). Then we have
\begin{align*}
    \vert e_j \vert = \gO \left( \sqrt{\delta A_{t_j}} +  D_0^{\frac{p-1}{3p+1}} ( L_p A_{t_j} )^{\frac{2}{3p+1}}   \right).
\end{align*}
Now, summing up all the epochs and using the fact that $A_t \le A_T$ for all $t \in [T]$ gives
\begin{align*}
    T \le \gO\left( \sqrt{\delta A_T} +D_0^{\frac{p-1}{3p+1}} ( L_p A_T )^{\frac{2}{3p+1}} \right).
\end{align*}
Conversely, it means that $A_T$ must satisfy
\begin{align*}
    A_T = \Omega\left( \min \left\{ \frac{T^2}{\delta},
    \frac{T^{(3p+1)/2}}{D_0^{(p-1)/2} L_p} \right\} \right).
\end{align*}
Finally, using $E_T \le D_0 / A_T$ completes the proof.
\end{proof}

\section{Proof of Lemma \ref{lem:inexact-Newton-step}}

\begin{proof}
Recall that $\lambda = 2 \delta + L_2 \Vert \vy - \vx \Vert$.
From the first-order optimality condition of Eq.~(\ref{eq:inexact-CRN-step}), we have 
\begin{align*}
    \vzero = \nabla f(\vx) + (\mH(\vx) + 2 \delta \mI_d) (\vy - \vx) + 2L_3 \Vert \vy - \vx \Vert (\vy - \vx). 
\end{align*}
Rearranging this inequality gives 
\begin{align*}
     &\nabla f(\vy) + \lambda (\vy - \vx) \\
     =& \nabla f(\vy) - \nabla f(\vx) - \mH(\vx)(\vy-\vx) \\
     =& \nabla f(\vy) - \nabla f(\vx) - \nabla^2(\vx)(\vy-\vx) + (\nabla^2 f(\vx) - \mH(\vx))(\vy - \vx).
\end{align*}
Hence, taking the norm on both sides and using the triangle inequality, we obtain 
\begin{align*}
    &\Vert \nabla f(\vy) + \lambda (\vy - \vx) \Vert \\
    \le& \Vert \nabla f(\vy) - \nabla f(\vx) - \nabla^2(\vx)(\vy-\vx) \Vert + \Vert \nabla^2 f(\vx) - \mH(\vx) \Vert \Vert \vy - \vx \Vert \\
    \le& \frac{L_2}{2} \Vert \vy - \vx \Vert^2 + \delta \Vert \vy - \vx \Vert  = \frac{\lambda}{2} \Vert \vy - \vx \Vert,
\end{align*}
where the last inequality uses Assumption \ref{asm:lip-pth-deriv} and \ref{asm:delta-Hess}.
\end{proof}

\section{Proof of Lemma \ref{lem:relative-smooth}}

\begin{proof}

Following \citet{agafonovadvancing,nesterov2021implementable}, we denote the $i$th-order directional derivative of function $f:\sR^d \rightarrow \sR$ at the point $\vx \in \sR^d$ along with directions $\vs_1,\cdots,\vs_i \in \sR^d$ as $\nabla^i f(\vx) [ \vs_1,\cdots, \vs_d]$. For simplicity, we also denote $\nabla^i f(\vx) [\vs]^i$ if $\vs_1=\cdots=\vs_i$.
Recall that when $f:\sR^d \rightarrow \sR$ is a convex function, we have the matrix inequality \citep[Lemma 3]{nesterov2021implementable}:
\begin{align*}
    - \frac{1}{\xi} \nabla f(\vy) - \frac{\xi L_3}{2} \Vert \vy - \vz_t \Vert^2 \preceq \nabla^3 f(\vz_t)[\vy -\vz_t] \preceq \frac{1}{\xi} \nabla f(\vy) - \frac{\xi L_3}{2} \Vert \vy - \vz_t \Vert^2, \quad \forall \xi >0.
\end{align*}
Then, using Assumption \ref{asm:lip-pth-deriv} ($p=3$) and \ref{asm:delta-Hess}, we have
\begin{align*}
    \nabla^2 f(\vy) \preceq& 
    \nabla^2 f(\vz_t) + \nabla^3 f(\vz_t)[\vy -\vz_t] + {L_3} \Vert \vy - \vz_t \Vert^2 \mI_d \\
    \preceq& \left( 1 + \frac{1}{\xi} \right) \nabla^2 f(\vz_t) + (1+\xi) L_3 \Vert \vy - \vz_t \Vert^2 \mI_d \\
    \preceq& \left( 1 + \frac{1}{\xi} \right) (\mH(\vz_t) + \delta \mI_d) + (1+\xi) L_3 \Vert \vy - \vz_t \Vert^2 \mI_d.
\end{align*}
and
\begin{align*}
    \nabla^2 f(\vy) \succeq&  \nabla^2 f(\vz_t) + \nabla^3 f(\vz_t)[\vy -\vz_t] - {L_3} \Vert \vy - \vz_t \Vert^2 \mI_d \\
    \succeq & \left( 1 - \frac{1}{\xi} \right) \nabla^2 f(\vz_t) -(1+\xi) L_3 \Vert \vy - \vz_t \Vert^2 \mI_d \\
    \succeq& \left( 1 - \frac{1}{\xi} \right) (\mH(\vz_t) - \delta \mI_d) - (1+\xi) L_3 \Vert \vy - \vz_t \Vert^2 \mI_d.
\end{align*}
Therefore, for the proximal function $f_t(\vy)$ we define in Eq. (\ref{eq:prox-func-ft}) and the scaling function $\rho_t(\vy)$ in Eq. (\ref{eq:scaling-func-rhot}), we have
\begin{align*}
    \nabla^2 f_t(\vy) \preceq & \left( 1 + \frac{1}{\xi} \right) \left(\mH(\vz_t) + \xi \delta \mI_d + (\xi+\xi^2) L_3 \Vert \vy - \vz_t \Vert^2 \mI_d \right) = \left( 1 + \frac{1}{\xi} \right)  \nabla^2 \rho_t(\vy)
\end{align*}
and 
\begin{align*}
\nabla^2 f_t(\vy) \succeq& \left( 1 - \frac{1}{\xi} \right) (\mH(\vz_t) + \xi \delta \mI_d + (\xi+\xi^2) L_3 \Vert \vy - \vz_t \Vert^2 \mI_d) = \left( 1 - \frac{1}{\xi} \right) \nabla^2 \rho_t(\vy).
\end{align*}

\end{proof}

\section{Proof of Lemma \ref{lem:prox-funct-ystar-bounded}}

\begin{proof}
The proof of this lemma follows the same idea as \citet[Lemma 4.3]{nesterov2021superfast}. Let $d_4(\vy) = \Vert \vy \Vert^4 /4 $ be the prox-function. We know that this function satisfies the following inequality \citep[inequality 2.3]{nesterov2021superfast}:
\begin{align} \label{eq:uniformly-convex-d4}
    d_4(\vy') \ge d_4(\vy) + \langle \nabla d_4(\vy), \vy' - \vy \rangle + \frac{1}{16} \Vert \vy' - \vy \Vert^4, \quad \forall \vy,\vy' \in \sR^d. 
\end{align}
Therefore, for the proximal function $f_t(\vy)$ we define in Eq. (\ref{eq:prox-func-ft}) and the scaling function $\rho_t(\vy)$ in Eq. (\ref{eq:scaling-func-rhot}), we have
\begin{align*}
    & \langle \nabla f(\vz_t), \vz_t - \vy_t^* \rangle = \langle \nabla f_t(\vz_t), \vz_t - \vy_t^* \rangle = f_t(\vz_t) - f_t(\vz_t^*) + \beta_{f_t}(\vz_t, \vy_t^*)\\ 
    =&\beta_{f_t}(\vy_t^*, \vz_t) + \beta_{f_t}(\vz_t, \vy_t^*)
    \ge \left(1 - \frac{1}{\xi}\right) \left( \beta_{\rho_t}(\vy_t^*, \vz_t) + \beta_{\rho_t}(\vz_t, \vy_t^*) \right) \\
    \ge& \left(1 - \frac{1}{\xi}\right) \frac{(\xi + \xi^2) L_3}{12} \left( \beta_{d_4}(\vy_t^*, \vz_t) + \beta_{d_4}(\vz_t, \vy_t^*) \right) \\
    \ge&  \left(1 - \frac{1}{\xi}\right) \frac{(\xi + \xi^2) L_3}{96} \Vert \vz_t - \vy_t^* \Vert^4,
\end{align*}
where the last step uses inequality (\ref{eq:uniformly-convex-d4}). Finally, rearranging the above inequality and applying the Cauchy-Schwarz inequality leads to
\begin{align*}
    \Vert \vz_t - \vy_t^* \Vert \le 2 \left(\frac{12 \Vert \nabla f(\vz_t) \Vert}{L_3 (\xi^2-1)} \right)^{1/3} \le 2 \left(\frac{12L_0}{L_3 (\xi^2-1)} \right)^{1/3}.
\end{align*}
\end{proof}

\section{Proof of Lemma \ref{lem:bounded-norm}}

\begin{proof}
For the monotone variant of AINE we use in Algorithm \ref{alg:AINE} for $p=3$, we have $f(\vx_t) \le f(\vx_0)$ and hence $\Vert \vx_t - \vx^* \Vert \le R(\vx_0)$. Then, Lemma \ref{lem:Car} indicates that $\Vert \vv_t - \vx^* \Vert \le \Vert \vx_0 - \vx^* \Vert \le R(\vx_0)$. Therefore, we have $\Vert \vz_t - \vx^* \Vert \le \max\{ \Vert \vx_t - \vx^* \Vert, \Vert \vv_t - \vx^* \Vert \Vert \} \le R(\vx_0)$. Therefore, all the iterates in the outer loop of Algorithm~\ref{alg:AINE} lies in the compact set $\{\vx \in \sR^d: \Vert \vx - \vx^* \Vert \le R(\vx_0)\}$.

Now, we show that all the iterates generated by the BDGM subroutine in the inner loop to implement the MS oracle also lies in a compact set. By Lemma \ref{lem:prox-funct-ystar-bounded}, it remains to upper-bound $R_t$ defined in the lemma, which requires an upper bound of $\Vert \nabla f(\vz_t) \Vert$. Since $\Vert \vz_t - \vx_0 \Vert \le \Vert \vz_t - \vx^* \Vert + \Vert \vx_0 - \vx^* \Vert \le 2R(\vx_0)$, we can apply Lemma \ref{lem:bound-grad-hess-l3} to upper-bound $\Vert \nabla f(\vz_t) \Vert$ by
\begin{align*}
    \Vert \nabla f(\vz_t) \Vert \le& \Vert \nabla f(\vx_0) \Vert+ \Vert \nabla^2 f(\vx_0) \Vert + 2 \Vert \nabla^3 f(\vx_0) \Vert R^2(\vx_0) + \frac{4L_3 R^3(\vx_0)}{3}.
\end{align*}
Hence, $R_t = {\rm poly}(L_3, R(\vx_0), \Vert \nabla f(\vx_0) \Vert, \Vert \nabla^2 f(\vx_0) \Vert, \Vert \nabla^3 f(\vx_0) \Vert )$ and so does $\bar R$.
\end{proof}

\section{Proof of Lemma \ref{lem:translated-MS-cond}}

\begin{proof}
Starting with the relaxed condition (\ref{eq:relax-MS-cond}), we have
\begin{align*}
     \Vert \nabla f_t(\vy_t) \Vert \le& \frac{(\xi^2- 1)\delta}{3\xi} (\Vert \vz_t - \vy_t \Vert + \Vert \vy_t - \vy_t^* \Vert) \\
     \le&  \frac{(\xi^2- 1)\delta}{3\xi} \Vert \vz_t - \vy_t \Vert + \frac{1}{3}\Vert \nabla f_t(\vy_t) \Vert,
\end{align*}
where the second inequality uses the fact that $f_t(\,\cdot\,)$ defined in Eq. (\ref{eq:prox-func-ft}) is $(\xi^2-1) \delta/\xi$-strongly convex and therefore $(\xi^2-1) \delta/\xi \Vert \vy - \vy_t^* \Vert \le \Vert \nabla f_t(\vy) \Vert$. Rearranging the above inequality, we obtain that
\begin{align*}
    \Vert \nabla f_t(\vy_t) \Vert \le& \frac{(\xi^2- 1)\delta}{2\xi} \Vert \vz_t - \vy_t \Vert \le \frac{\lambda_t}{2} \Vert \vz_t - \vy_t \Vert,
\end{align*}
where the second inequality is due to $ \lambda_t \ge (\xi^2-1) \delta/\xi$ from the the definition of $\lambda_t$ in Eq. (\ref{eq:dfn-lambda-t}).
\end{proof}

\section{Proof of Theorem \ref{thm:superfast-implementation}}

\begin{proof}
Under the notations in this theorem, Lemma \ref{lem:BDGM} can be restated as
\begin{align} \label{eq:BDGM-yt}
    f_t(\vy_t) - f_t(\vy_t^*) \le \underbrace{\frac{\xi-1}{2\xi} \left( 1 - \frac{\xi+1}{4(\xi-1)} \right)^K}_{\alpha_K} \beta_{\rho_t}(\vz_0,\vy_t^*).
\end{align}
For the smoothness constant $\bar L_1$ defined in Eq. (\ref{eq:l0-l1-on-gx}), if $\Vert \nabla f(\vz_t) \Vert \ge \bar \epsilon$, then we have
\begin{align*}
 \Vert \vz_t - \vy_t^* \Vert \ge \frac{1}{\bar L_1}
    \Vert \nabla f_t(\vz_t) \Vert =\frac{1}{\bar L_1} \Vert \nabla f(\vz_t) \Vert \ge \frac{\bar \epsilon}{\bar L_1}.
\end{align*}
Therefore, by Lemma \ref{lem:translated-MS-cond}, to satisfy the MS condition, it suffices to find $\vy_t \in \sR^d$ such that
\begin{align} \label{eq:MS-condition-relax-twice}
    \Vert \nabla f_t(\vy_t) \Vert \le \frac{(\xi^2-1) \delta \bar \epsilon}{3 \xi \bar L_1}. 
\end{align}
Now, let us show that the BDGM subroutine (Algorithm \ref{alg:BGDM-for-prox-3}) can efficiently find such $\vy_t \in \sR^d$.  We have
\begin{align*}
    \frac{1}{16 \bar L_1^4} \Vert \nabla f_t(\vy_t) \Vert^{4} \le \frac{1}{16} \Vert \vy_t - \vy_t^* \Vert^4 \le f_t(\vy_t) - f_t(\vy_t^*) \le \alpha_K \beta_{\rho_t}(\vz_t,\vy_t^*).
\end{align*}
where the first inequality uses the fact that $f_t(\vy)$ has $\bar L_1$-Lipschiitz continuous gradients on the set $\gX$, the second inequality uses inequality (\ref{eq:uniformly-convex-d4}), the third one is due to inequality (\ref{eq:BDGM-yt}). Note that $\Vert \vz_t - \vy_t^* \Vert \le R_t$ by Lemma \ref{lem:prox-funct-ystar-bounded}. We can then apply the norm-dominance inequality \citep[Lemma 3.2]{nesterov2021superfast} to obtain that
\begin{align*}
    \beta_{\rho_t}( \vz_t,\vy_t^* ) \le \frac{\xi^2-1}{2\xi} \left(\bar L_1 + 2\delta  \right) R_t^2 + \frac{(1+\xi)\xi L_3^2 R_t^4}{2}.
\end{align*}
Recall that $R_t = {\rm poly}(L_3, R(\vx_0), \Vert \nabla f(\vx_0) \Vert, \Vert \nabla^2 f(\vx_0) \Vert, \Vert \nabla^3 f(\vx_0) \Vert )$. We then know that the MS condition can be satisfied with $K = {\rm poly} \log(\delta^{-1}, \bar \epsilon^{-1}, \bar L_1, L_3, \sup_{t \in [T]} R_t)$.
\end{proof}

\section{Proof of Proposition \ref{prop:lb-convex}}
\begin{proof}
Note that a lower bound of 
$\Omega\left(L_p D_0^{(p+1)/2}/ T^{(3p+1)/2}\right)$ follows immediately from \citet[Theorem~3]{arjevani2019oracle}. Therefore, it suffices to prove a lower bound of $\Omega(\delta D_0/{T^2})$. This can can be obtained by applying the \citet[Theorem~3]{arjevani2019oracle} with $p=1$ and $L_1= \delta$, whose hard instance $f : \sR^d \rightarrow \sR$ is quadratic and therefore has $0$-Lipschitz continuous $p$th-order derivatives for any $p \ge 2$. As for the Hessian estimator $\mH(\vx)$, we can simply set it as $\mH(\vx) = \mO_d$ to satisfy Assumption~\ref{asm:delta-Hess} because we have $\Vert \mH(\vx) - \nabla^2 f(\vx) \Vert = \Vert \nabla^2 f(\vx) \Vert \le \delta$ under such a setting.
\end{proof}

\section{Proof of Proposition \ref{prop:aux-solver-p}}

\begin{proof}

For subproblem (\ref{eq:aux-p2}) of the form:
\[
\min_{\Vert \vh \Vert \le R} q(\vh) \triangleq \langle \vb , \vh \rangle + \frac{1}{2} \langle \mA \vh, \vh \rangle + \frac{c}{p+1} \Vert \vh \Vert^{p+1},
\]
we consider two cases depending on whether the optimal solution $\vh^*$ lies on the boundary such that $\Vert \vh^* \Vert = R$. If not, then $\boldsymbol{h}^*$ lies in the interior of the feasible set ($\|\boldsymbol{h}\| < R$), it must satisfy the first-order optimality condition:
$$\vzero =  \boldsymbol{b} + \boldsymbol{A}\boldsymbol{h} + c \|\boldsymbol{h}\|^{p-1} \boldsymbol{h}. 
$$

Let $\lambda = c \|\boldsymbol{h}\|^{p-1}$ be a non-negative scalar. The equation becomes $(\boldsymbol{A} + \lambda \boldsymbol{I})\boldsymbol{h} = -\boldsymbol{b}$. For any fixed $\lambda$, the solution is $\boldsymbol{h}(\lambda) = -(\boldsymbol{A} + \lambda \boldsymbol{I})^{-1}\boldsymbol{b}$. The problem reduces to finding $\lambda$ such that:
$$\lambda = c  \|\boldsymbol{h}(\lambda)\|^{p-1}.$$

Otherwise, if  the solution $\vh^*$ lies on the boundary $\|\boldsymbol{h}\| = R$. The KKT conditions imply the existence of a Lagrange multiplier $\sigma > 0$ such that:
$$\vzero = \boldsymbol{b} + \boldsymbol{A}\boldsymbol{h} + c R^{p-1} \boldsymbol{h} + \sigma \boldsymbol{h}.$$
By defining a combined multiplier $\gamma = c R^{p-1} + \sigma$, we obtain the same functional form: $\boldsymbol{h}(\gamma) = -(\boldsymbol{A} + \gamma \boldsymbol{I})^{-1}\boldsymbol{b}$. Now, we must find $\gamma$ such that the norm constraint is exactly satisfied:
$$\|\boldsymbol{h}(\gamma)\| = R.$$

To solve for $\lambda$ or $\gamma$ efficiently, we compute the eigenvalue decomposition of the symmetric matrix~$\boldsymbol{A}$:
$$\boldsymbol{A} = \boldsymbol{Q} \boldsymbol{\Lambda} \boldsymbol{Q}^\top, \quad \text{where } \boldsymbol{\Lambda} = \text{diag}(\lambda_1, \dots, \lambda_d).$$
This decomposition requires $O(d^\omega)$ running time. Letting $\tilde{\boldsymbol{b}} = \boldsymbol{Q}^\top \boldsymbol{b}$, the squared norm of the solution as a function of the multiplier $\alpha$ (representing either $\lambda$ or $\gamma$) becomes:
$$\phi(\alpha) = \|\boldsymbol{h}(\alpha)\|^2 = \sum_{i=1}^d \frac{\tilde{b}_i^2}{(\lambda_i + \alpha)^2}.$$
The convex function $\phi(\alpha)$ is strictly decreasing for $\alpha > -\lambda_1$. Consequently, the optimal multiplier can be found using the one-dimensional bisection search with $C$ iterations, and each iteration requires $\gO(d)$ running time. Thus, the overall running time is $\mathcal{O}(d^\omega + C d)$.
\end{proof}

\section{Numerical Experiments} \label{apx:exp}


In this section, we conduct numerical experiments on the regularized logistic regression problem of the  following form:
\begin{align*}
    \min_{\vx \in \sR^d} f(\vx) := \frac{1}{n} \sum_{i=1}^n \log (1 + \exp(- b_i \va_i^\top \vx)) +  \frac{1}{2n} \Vert \vx \Vert^2,
\end{align*}
where $\{(\va_i,b_i \}_{i=1}^n$ is a dataset with $n$ samples.
We set $\mH(\vx)$ as a mini-batch Hessian estimator with batch size $B \in \{100,1000,10000\}$, which provably satisfies Assumption~\ref{asm:delta-Hess} with high probability due to the matrix Hoeffding inequality \citep[Section 7]{agafonov2024inexact}.
We run experiments on the \texttt{covtype} dataset $(n = 581012, d=54)$ from LIBSVM \citep{chang2011libsvm} by comparing our proposed methods for $p \in \{2,3\}$ with baselines AGD \citep{nesterov1983method}, CRN \citep{nesterov2006cubic}, AITM \citep{agafonov2024inexact,agafonovadvancing}, and optimal tensor method (OTM) \citep{carmon2022optimal} for $p \in \{2,3\}$. The results are presented in Figure \ref{tab:fig-exp}, showcasing the superiority of our method.

\begin{table}[t]
    \centering
    \begin{tabular}{ccc}
    \includegraphics[scale=0.28]{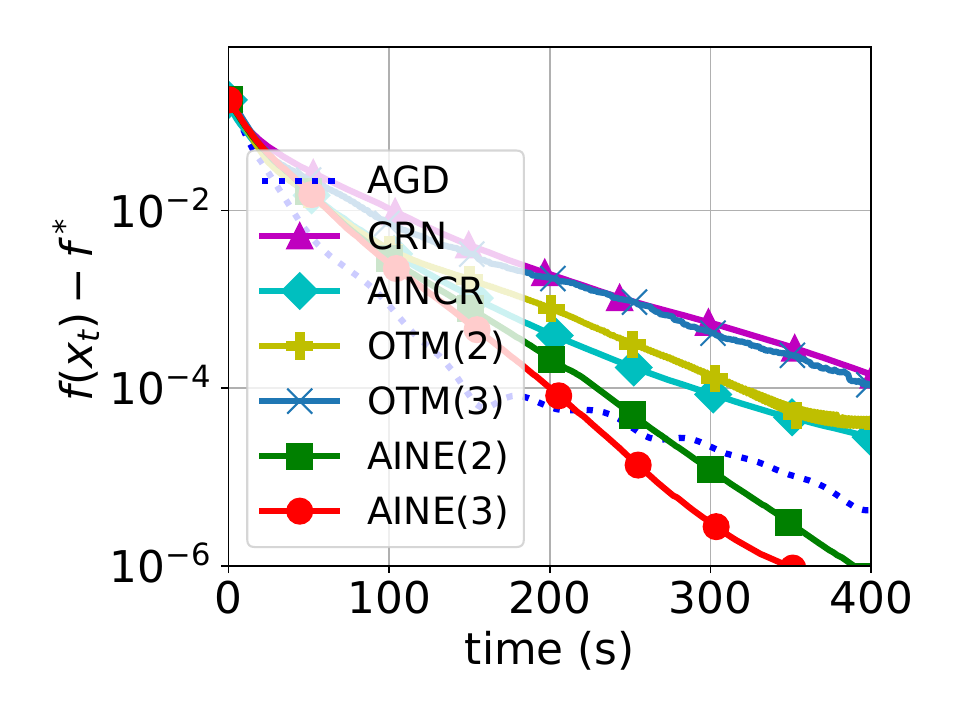}   & 
    \includegraphics[scale=0.28]{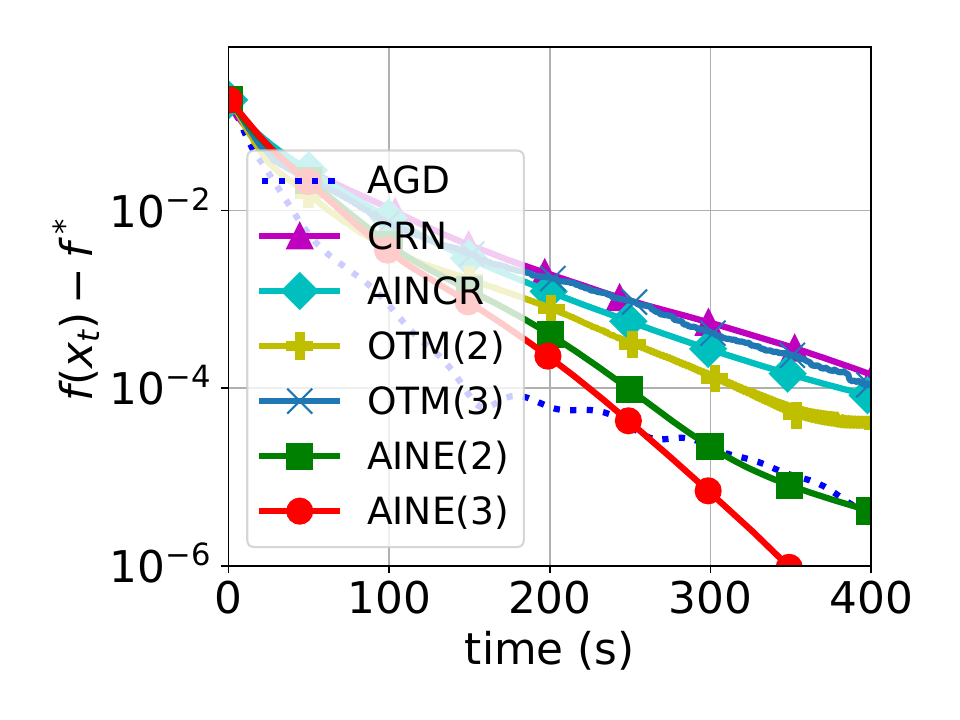} &
    \includegraphics[scale=0.28]{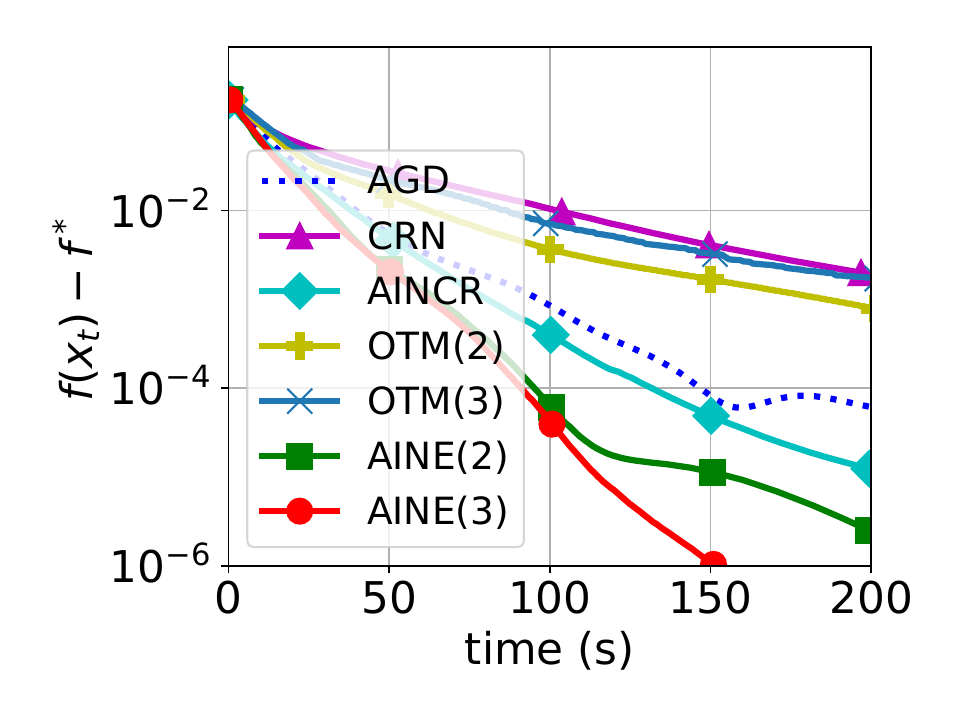}
    \\
    (a) $B= 100$     & (b) $B=1000$ & (c) $B=10000$
    \end{tabular}
    \caption{The results on logistic regression with different Hessian batch sizes $B$. We use AINE$(p)$ and OTM$(p)$ to denote the corresponding methods with order $p \in \{2,3\}$.}
    \label{tab:fig-exp}
\end{table}